\renewcommand{\qedsymbol}{$\blacksquare$} 
\let\save@mathaccent\mathaccent
\newcommand*\if@single[3]{
	\setbox0\hbox{${\mathaccent"0362{#1}}^H$}%
	\setbox2\hbox{${\mathaccent"0362{\kern0pt#1}}^H$}%
	\ifdim\ht0=\ht2 #3\else #2\fi }
\newcommand*\rel@kern[1]{\kern#1\dimexpr\macc@kerna}
\newcommand*\widebar[1]{\@ifnextchar^{{\wide@bar{#1}{0}}}{\wide@bar{#1}{1}}}
\newcommand*\wide@bar[2]{\if@single{#1}{\wide@bar@{#1}{#2}{1}}{\wide@bar@{#1}{#2}{2}}}
\newcommand*\wide@bar@[3]{
	\begingroup
	\def\mathaccent##1##2{
		\let\mathaccent\save@mathaccent
		\if#32 \let\macc@nucleus\first@char \fi
		\setbox\z@\hbox{$\macc@style{\macc@nucleus}_{}$}
		\setbox\tw@\hbox{$\macc@style{\macc@nucleus}{}_{}$}
		\dimen@\wd\tw@ \advance\dimen@-\wd\z@ \divide\dimen@ 3 \@tempdima\wd\tw@ \advance\@tempdima-\scriptspace \divide\@tempdima 10 \advance\dimen@-\@tempdima \ifdim\dimen@>\z@ \dimen@0pt \fi \rel@kern{0.6}\kern-\dimen@
		\if#31 \overline{\rel@kern{-0.6}\kern\dimen@\macc@nucleus\rel@kern{0.4}\kern\dimen@} \advance\dimen@0.4\dimexpr\macc@kerna \let\final@kern#2 \ifdim\dimen@<\z@ \let\final@kern1 \fi
		\if \final@kern1 \kern-\dimen@ \fi
		\else \overline{\rel@kern{-0.6}\kern\dimen@#1} \fi }
	\macc@depth\@ne	\let\math@bgroup\@empty \let\math@egroup\macc@set@skewchar 	\mathsurround\z@ \frozen@everymath{\mathgroup\macc@group\relax} 	 \macc@set@skewchar\relax \let\mathaccentV\macc@nested@a	\if#31 \macc@nested@a\relax111{#1} \else \def\gobble@till@marker##1\endmarker{} \futurelet\first@char\gobble@till@marker#1\endmarker \ifcat\noexpand\first@char A\else \def\first@char{} \fi \macc@nested@a\relax111{\first@char} \fi
	\endgroup }
\newcommand\tstrut{\rule{0pt}{2.9ex}}
\def\blfootnote{\xdef\@thefnmark{}\@footnotetext}
\DeclareMathOperator{\Exp}{\mathbb{E}}
\DeclareMathOperator{\Var}{\mathbb{V}\mathrm{ar}}
\DeclareMathOperator{\Ind}{\mathbb{I}}
\let\Pr\relax\DeclareMathOperator{\Pr}{\bm{P}} 
\newcommand{\N}{\ensuremath{\mathbb{N}}} 
\newcommand{\Z}{\ensuremath{\mathbb{Z}}} 
\newcommand{\calH}{\ensuremath{\mathcal{H}}} 
\newcommand{\calN}{\ensuremath{\mathcal{N}}} 
\newcommand{\frakp}{\ensuremath{\mathfrak{p}}} 
\newcommand{\frakq}{\ensuremath{\mathfrak{q}}} 
\newcommand{\frakP}{\ensuremath{\mathfrak{P}}} 
\newcommand{\frakS}{\ensuremath{\mathfrak{S}}}
\DeclareMathOperator{\rk}{rk} 
\DeclareMathOperator{\id}{id} 
\DeclareMathOperator{\dom}{dom} 
\DeclareMathOperator{\im}{im} 
\DeclarePairedDelimiter{\floor}{\lfloor}{\rfloor}
\theoremstyle{plain}
\newtheorem{thm}{Theorem}
\newtheorem*{claim}{Claim}
\newtheorem{lemma}[thm]{Lemma}
\theoremstyle{remark}
\theoremstyle{definition}
\newtheorem{defn}[thm]{Definition}
\title{Normal limiting distributions \\for systems of linear equations in random sets}
\author[1]{Juanjo Ru\'e\thanks{Email: juan.jose.rue@upc.edu. Research of J.~R.~supported by the Spanish State Research Agency through projects MTM2017-82166-P, PID2020-113082GB-I00,
the Severo Ochoa and María de Maeztu Program for Centers and Units of Excellence in R\&D (CEX2020-001084-M), and the Marie Curie RISE research network 'RandNet' MSCA-RISE-2020-101007705.}}
\author[2]{Maximilian W\"otzel\thanks{Email: maximilian.woetzel@ru.nl. This research was conducted while M.~W. was a member of the Barcelona Graduate School of Mathematics (BGSMath) as well as the Universitat Polit\`ecnica de Catalunya.
Maximilian W\"otzel acknowledges financial support from the Fondo Social Europeo and the Agencia Estatal de Investigaci\'on through the FPI grant number MDM-2014-0445-16-2 and the Spanish Ministry of Economy and Competitiveness, through the Mar\'ia de Maeztu Programme for Units of Excellence in R\&D (MDM-2014-0445), through the project MTM2017-82166-P, as well as from the Dutch Science Council (NWO) through the grant number OCENW.M20.009.}}
\affil[1]{Departament de Matemàtiques and Institut de Matemàtiques de la UPC (IMTech), Universitat Politècnica de Catalunya \& Centre de Recerca Matemàtica \& Barcelona Graduate School of Mathematics (BGSMath), Barcelona, Spain.}
\affil[2]{Department of Mathematics, Radboud University, Nijmegen, The Netherlands.}
\date{}
\begin{document}
\maketitle

\begin{abstract} 
We consider the binomial random set model $[n]_p$ where each element in $\{1,\dots,n\}$ is chosen independently with probability $p:=p(n)$.
We show that for essentially all regimes of $p$ and very general conditions for a matrix $A$ and a column vector $\bm{b}$, the count of specific integer solutions to the system of linear equations $A\bm{x} = \bm{b}$ with the entries of $\bm{x}$ in $[n]_p$ follows a (conveniently rescaled) normal limiting distribution.
This applies among others to the number of solutions with every variable having a different value, as well as to a broader class of so-called non-trivial solutions in homogeneous strictly balanced systems. 
Our proof relies on the delicate linear algebraic study both of the subjacent matrices and the corresponding ranks of certain submatrices, together with the application of the method of moments in probability theory.
\end{abstract}

\section{Introduction and main results}\label{sec:introMains}

The study of the existence of solutions to linear equations in subsets of the integers (and more generally in additive or even non-abelian groups) has been a prominent topic not only in number theory, but also in extremal combinatorics, ergodic theory, functional analysis and theoretical computers science, among other research areas.
A prototypical example of such investigations is Roth's Theorem~\cite{Roth53}, which proves (by using Fourier analytic means) that dense sets of integers always contain arithmetic progressions of length 3. 
This result was fully generalized by Szemer\'edi~\cite{Szemeredi75}, who obtained a similar statement for arithmetic progressions of all (fixed) lengths.

Following this line of research, Frankl, Graham and R\"odl~\cite[Theorem 2]{FGR88} proved similar results for homogeneous systems of linear equations. 
In particular, they proved that when $A$ is a \emph{density regular} matrix with integer entries (that is, the columns of $A$ sum up to the zero vector), then any dense integer set $T$ will contain solutions to the linear system $A\bm{x}=0$.
This result generalizes Szemer\'edi's Theorem, as arithmetic progressions of length $k$ (or $k$-APs for short) can be encoded as solutions to the linear system $x_2-x_1=x_3-x_2=\dots=x_k-x_{k-1}$. 
Other classical equations, such as the Schur equation for sum-free sets (sets without solutions to the equation $x+y=z$), and Sidon sets (sets without non-trivial solutions to the equation $x+y=z+t$) also fit into this framework.

A very recent trend of investigation is to transfer these extremal results to a sparse setting, and hence to prove analogues on subsets which are dense relative to certain ambient sets.
Specific examples for such sparse ambient sets are for instance the primes or perfect powers, but also the binomial random set $[n]_p$ obtained by choosing to include independently each element in $[n]=\{1,2,\dots,n\}$ with probability $p$.
Returning to the example of $k$-APs, define a set $T$ to be \emph{$(\delta, k)$--Szemer\'edi} if every subset $U\subseteq T$ with at least $\delta|T|$ elements contains a $k$-AP. 
Thus, Szemer\'edi's Theorem shows that $[n]$ (in fact, every set $T$ that is itself dense in $[n]$) is $(\delta, k)$--Szemer\'edi for every $\delta$ and $k$. 
For the specific case of $k=3$, Kohayakawa, {\L}uczak and R\"odl~\cite{KoLuRo96} proved that for all $\delta>0$ there exists constants $c,C>0$ such that if $p\leq c n^{-1/2}$, then as $n$ tends to infinity, $\Pr(\text{$[n]_p$ is $(\delta,3)$--Szemer\'edi})$ tends to $0$, while if $p\geq C n^{-1/2}$, it tends to $1$. 
The existence of this threshold was extended to all values of $k$ in~\cite{Scha16,CoGo16}, and later rediscovered in~\cite{SaTh15,BaMoSa15} in the context of independent sets in hypergraphs.
These ideas, named \emph{hypergraph container method} were then used by Spiegel~\cite{Spiegel2017} and independently by Hancock, Staden and Treglown~\cite{HaStTr2019} to extend~\cite[Theorem 2]{FGR88} to the broadest class of linear systems possible.
Similar techniques were used by Ru\'e, Serra and Vena~\cite{RuSeVe17} to study random sparse analogues of~\cite[Theorem 2]{FGR88} in finite fields and more general configurations than linear systems of equations.

The main contribution of our paper follows this trend of research, with the aim of studying the \emph{typical} number of solutions to linear systems of equations in random sets of integers. 
Our main theorem, Theorem~\ref{thm:distribution} is technical and in order to formally state it we need to introduce a wide variety of algebraic definitions and lemmas, which will be the core of Section~\ref{sec:prelimLems}. 
However, we will now introduce some main definitions in order to be able to formulate two important consequences of Theorem~\ref{thm:distribution}, namely Theorems~\ref{thm:distProper} and~\ref{thm:distHomStrictBal}.

\paragraph{Proper solutions and their distribution}
In order to properly state our results we need to introduce some notation.
Let $m>r$ be positive integers, $A\in\Z^{r\times m}$ an integer matrix, and $\bm{b}\in\Z^{r}$ an integer vector.
We write the \emph{rank} of a matrix $A$ by $\rk(A)$.
Define $S(A,\bm{b})=\{\bm{x}\in\Z^m : A\bm{x}=\bm{b}\}$ as the set of integer solutions to the system of linear equations $A\bm{x}=\bm{b}$.
Furthermore, if $Q\subset [m]$, let $A^Q\in\Z^{r\times |Q|}$ denote the $r\times |Q|$ matrix obtained by only keeping the columns of $A$ indexed by $Q$.
We notice that the rank of the empty matrix $A^\emptyset$ is zero.
We will identify tuples $\bm{x}=(x_1,\dots,x_m)\in\Z^m$ with the corresponding column vectors but abuse notation slightly by letting $\bm{x}^Q$ denote the vector obtained by only keeping the \emph{rows} of $\bm{x}$.
This should not be confused with the notation $\bm{x}^k$, where $k$ is a positive integer, which we will use to denote the vector \[\bm{x}^k = (\underbrace{\bm{x},\dots,\bm{x}}_\text{$k$ times}) = (x_1,\dots,x_m,x_1,\dots,x_m,\dots,x_1,\dots,x_m)\in \Z^{km}.\]
Finally, if $\bm{x}=(x_1,\dots,x_m)\in\Z^m$ is an $m$-tuple, we will write $\{\bm{x}\}$ as a shorthand for the set $\{x_1,\dots,x_m\}$.
In particular, if some entry in $\bm{x}$ is repeated, then the cardinality of $\{\mathbf{x}\}$ is strictly less than $m$.

Roughly speaking, our main goal in this paper is to estimate $|S(A,\bm{b})\cap [n]_p^m|$, that is, the number of solutions whose entries lie in $[n]_p$ for different regimes of $p$.
In order for this to be a well-posed problem, we need some definitions from~\cite{Spiegel2020} and~\cite{RSZ2018}.

\begin{defn}\label{defn:positive_abundant}
An integer matrix $A\in\Z^{r\times m}$ is said to be
\begin{enumerate}[label=\roman*)]
\item \emph{positive} if there exists an integer solution to $A\bm{x}=\bm{0}$ having all positive entries: \[S(A,\bm{0})\cap \N^m \neq \emptyset,\]
and if for any pair of indices $i,j\in[m]$, $i\neq j$, there exists a solution $(x_1,\dots,x_m)\in S(A,\bm{0})$ satisfying $x_i\neq x_j$.
\item \emph{abundant} if $\rk(A)>0$ and the removal of at most two columns does not change the rank of $A$: \[\rk(A^Q)=\rk(A)\] for any $Q\subset[m]$ satisfying $|Q|\geq m-2$.
\end{enumerate}
\end{defn}

Sometimes the second requirement for positivity in Definition~\ref{defn:positive_abundant} is called \emph{irredundancy}.
For our applications, we will never consider positivity and irredundancy separately, and hence we have combined those two properties into a single one for expediency's sake.

In general, considering any possible solution in $S(A,\bm{b})$ is a bit too lenient.
For instance, in the $3$-AP situation (whose associated matrix is $A=\begin{pmatrix}1 &-2 &1\end{pmatrix}$), $S(A,\bm{0})$ also contains all the tuples $(a,a,a)$ with $a\in\Z$, and those are clearly not of interest.
To remedy this issue we need to introduce the easy notion of \emph{proper} solutions to $A\bm{x}=\bm{b}$.

\begin{defn}\label{defn:proper}
Let $m>r$ be positive integers, $A\in\Z^{r\times m}$, and $\bm{b}\in\Z^r$.
Then the set $S_0(A,\bm{b})$ of \emph{proper} solutions is the subset of $S(A,\bm{b})$ with all coordinates being pairwise distinct, that is \[S_0(A,\bm{b}) = \{(x_1,\dots,x_m) \in S(A,\bm{b}) : x_i\neq x_j \text{ for all }1 \leq i < j\leq m\}.\]
\end{defn}

Before proceeding to the statement of our first main result, we need to define one more parameter, which will measure the densest subsystem of $A$.
The motivation behind this can be compared to the graph setting, where one wants to study the number of occurrences of some fixed graph $G$ as a subgraph of a binomial random graph with $n$ vertices and edge probability $p$.
Here, one naively would expect the threshold for a random graph to contain $G$ to be when $p$ is around $n^{-v(G)/e(G)}$, which is when the expected number of copies of $G$ flips from $0$ to positive.
But this is not the case: if $G$ contains a subgraph $H$ with $v(H)/e(H)>v(G)/e(G)$, then $n^{-v(H)/e(H)}$ will define the threshold instead.
A similar behavior occurs for the distribution of subgraph counts, as shown by Ruci\'{n}ski in~\cite{Rucinski1988}.

\begin{defn}\label{defn:systemdensity}
For positive integers $m>r$ and a positive integer matrix $A\in\Z^{r\times m}$, define the \emph{density} $c(A)$ of $A$ by \[c(A) = \max_{\emptyset\neq Q\subset[m]} \frac{|Q|}{|Q|-r_Q},\]
where $r_Q = r_Q(A)= \rk(A) - \rk(A^{\widebar{Q}})$, and $\widebar{Q}=[m]\setminus Q$.
\end{defn}

Note that this is indeed well-defined, since for a positive matrix we see that $\rk(A^{\widebar{Q}})\geq \rk(A)-|Q|+1$ for every $\emptyset\neq Q\subset m$ (see, for instance, the proof of Lemma~\ref{lemma:sols_with_fixed_elts}).

We are now ready to state our first main theorem.
For a random variable $X$ with finite first moment $\mathbb{E}(X)$ and non-zero finite variance $\Var(X)$, denote by $\tilde{X}=(X-\Exp(X))/\sqrt{\Var(X)}$ its normalization.
We write $X_n \xrightarrow{d} Y$  when a sequence of random variables $\{X_n\}_{n\geq 1}$ tends in distribution to $Y$.

\begin{thm}\label{thm:distProper}
Let $m>r$ be positive integers, $A\in\Z^{r\times m}$ a positive and abundant integer matrix, and $\bm{b}\in\Z^r$ such that $S(A,\bm{b})\neq\emptyset$.
Furthermore, let $n$ be an integer, $0\leq p:=p(n)\leq 1$ and $X_n$ the random variable equal $|S_0(A,\bm{b})\cap[n]_p^m|$, which counts the number of proper solutions $\bm{x}\in[n]_p^m$ to $A\bm{x}=\bm{b}$.
Then \[\tilde{X}_n \xrightarrow{d} \calN(0,1)\] if $n(1-p)\to\infty$ and $np^{c(A)}\to\infty$.
\end{thm}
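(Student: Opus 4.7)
I would attack this via the method of moments applied to the normalized variable $\tilde{X}_n$. Writing $\xi_i = \Ind[i \in [n]_p]$ and $Y_{\bm{x}} = \prod_{i \in \{\bm{x}\}} \xi_i$, we have $X_n = \sum_{\bm{x} \in S_0(A,\bm{b}) \cap [n]^m} Y_{\bm{x}}$, and since every $\bm{x} \in S_0(A,\bm{b})$ is proper each $Y_{\bm{x}}$ is a product of exactly $m$ independent $\mathrm{Bernoulli}(p)$ indicators, so $\Exp(Y_{\bm{x}}) = p^m$. A first step is to compute the first two moments asymptotically: standard lattice-point counting using the positivity of $A$ (which ensures irredundancy and rules out degenerate overcounts) gives $|S_0(A,\bm{b}) \cap [n]^m| = \Theta(n^{m-r})$, hence $\Exp(X_n) = \Theta(n^{m-r} p^m)$. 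For the variance, $\Cov(Y_{\bm{x}}, Y_{\bm{y}}) = p^{2m-s}(1 - p^s)$ where $s = |\{\bm{x}\} \cap \{\bm{y}\}|$, which vanishes when $s = 0$; summing over pairs with $s \geq 1$ and optimizing over the shared index set via the subrank quantities $r_Q$ yields a variance whose order is controlled precisely by $c(A)$, while the abundance hypothesis guarantees that pairs sharing one or two variables contribute at the expected rate.

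For each fixed $k \geq 1$ one must then show $\Exp(\tilde{X}_n^k) \to \Exp(Z^k)$ with $Z \sim \calN(0,1)$. Expanding
\[
\Exp\bigl[(X_n - \Exp(X_n))^k\bigr] = \sum_{\bm{x}_1, \dots, \bm{x}_k} \Exp\Bigl[\prod_{j=1}^k \bigl(Y_{\bm{x}_j} - \Exp(Y_{\bm{x}_j})\bigr)\Bigr]
\]
and grouping $k$-tuples by their \emph{intersection graph} on $[k]$ (where $j \sim \ell$ iff $\{\bm{x}_j\} \cap \{\bm{x}_\ell\} \neq \emptyset$), one sees that only graphs without an isolated vertex contribute, and the dominant order after normalization should come from perfect matchings. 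Each matching edge contributes essentially the variance, yielding $(k-1)!! \cdot \Var(X_n)^{k/2}$ for even $k$ and a negligible term for odd $k$ --- precisely the Gaussian moments.

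The main obstacle, I expect, will be showing that every non-matching intersection graph contributes only lower-order terms after normalization. This reduces to enumerating, for each prescribed intersection pattern, the number of $k$-tuples of proper solutions realizing it, weighted by the corresponding power of $p$, and bounded by ranks of submatrices of $A$. The parameter $c(A)$ is calibrated exactly so that $np^{c(A)} \to \infty$ forces every non-matching contribution to be $o(\Var(X_n)^{k/2})$, while abundance secures the critical base case of pairs with one or two common variables and prevents the variance itself from degenerating. The complementary hypothesis $n(1-p) \to \infty$ covers the regime where $p$ is close to $1$: via the reparametrization $\xi_i \mapsto 1 - \xi_i$ (producing $\mathrm{Bernoulli}(1-p)$ indicators) the same combinatorial bounds apply in reverse, preventing the variance from collapsing as $p \to 1$. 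I would expect the bulk of the work for Theorem~\ref{thm:distribution}, carried out in Section~\ref{sec:prelimLems}, to consist in making these rank and density estimates rigorous and in then verifying that the proper-solution setting falls inside the framework of Theorem~\ref{thm:distribution}.
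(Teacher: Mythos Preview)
Your approach matches the paper's: the proof of Theorem~\ref{thm:distProper} itself is a one-line specialization of the meta Theorem~\ref{thm:distribution} to $\frakP=\{(\{1\},\dots,\{m\})\}$, and your sketch of the moment method behind that meta theorem (intersection graphs, dominance of perfect matchings, rank/density bounds governed by $c(A)$) is accurate. One remark: the $p\to 1$ regime is not merely ``the same combinatorial bounds in reverse'' after the substitution $\xi_i\mapsto 1-\xi_i$; in the paper this case (Section~\ref{proof: casea=1}) requires a genuinely separate analysis in which the key invariant on a $k$-tuple $\chi$ is the vertex-cover number $\tau(\calH(\chi))$ and the dominant configurations are the so-called \emph{milky ways}, though the $\lfloor k/2\rfloor$-milky ways that ultimately survive do coincide with the matching structure you anticipate.
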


Note that in the specific case of $k$-APs, this problem was already investigated by Barhoumi-Andr\'eani, Koch and Liu~\cite{BaAnKoLi2019}.
In fact, they prove not only results on the limiting distribution of the number of $k$-term arithmetic progressions in $[n]_p$ even when $k$ is unbounded (but sub-logarithmic), they also establish a bivariate central limit theorem for the joint distribution when considering the counts of two distinct progression lengths.
In contrast, Theorem~\ref{thm:distProper} in the setting of progressions requires the length to be fixed.

\paragraph{Non-trivial solutions, and main theorem for their distribution}
Proper solutions are always of interest in the number theoretical context, but in a wide variety of situations we need to take care of non-proper solutions.
For example, for $A=\begin{pmatrix}1 &1 &-1 &-1\end{pmatrix}$, we see that $S(A,\bm{0})$ is the number of additive quadruples satisfying $a+b=c+d$.
Clearly, if $\{a,b\}\cap\{c,d\}\neq\emptyset$, then these two sets must in fact be the same.
On the other hand, solutions of the form $2a=c+d$ with $a\notin\{c,d\}$ are not proper but obviously of interest and might be considered as valid ones.

Keeping this in mind, we will now recall the notion of \emph{non-trivial} solutions for systems of linear equations due to Ru\'e, Spiegel and Zumalac\'{a}rregui~\cite{RSZ2018} which generalized an earlier notion of this for single line equations introduced by Ruzsa~\cite{Ruzsa1993}.
For a solution $\bm{x}=(x_1,\dots,x_m)\in S(A,\bm{b})$, we define $\frakp(\bm{x}) \subset 2^{[m]}$ to be the partition of $[m]$ such that for any $i,j\in[m]$ it holds that $x_i=x_j$ if and only if $i$ and $j$ are in the same partition class of $\frakp(\bm{x})$.
One can view $\frakp(\bm{x})$ as an ordered $|\frakp(\bm{x})|$-tuple $(C_1,\dots,C_{|\frakp(\bm{x})|})$ such that $\min C_i < \min C_j$ whenever $i<j$.
Doing this, we can now define the matrix $A_{\frakp(\bm{x})}$ in the following way.
Suppose the columns of $A$ are denoted by $\bm{c}_1,\dots,\bm{c}_m\in\Z^r$, then
\begin{equation*}
A_{\frakp(\bm{x})} = {\begin{pmatrix} \sum_{i\in C_1}\bm{c}_i &\big\vert &\sum_{i\in C_2}\bm{c}_i &\big\vert &\cdots &\big\vert &\sum_{i\in C_{|\frakp(\bm{x})|}}\bm{c}_i\end{pmatrix}},
\end{equation*}
so $A_{\frakp(\bm{x})}\in\Z^{r\times |\frakp(\bm{x})|}$ is the $r\times|\frakp(\bm{x})|$ matrix obtained by contracting all columns in the same partition class, with columns ordered by the minimum index in each class.
Finally, define the set
\begin{equation*}
\frakP(A) = \{\frakp \subset 2^{[m]} : \frakp \text{ is a partition of }[m]\text{ and }\rk(A_\frakp) = \rk(A)\}.
\end{equation*}

We are finally ready to introduce the notion of a \emph{non-trivial} solution.

\begin{defn}\label{defn:nontrivial}
Let $m>r$ be positive integers, $A\in\Z^{r\times m}$, and $\bm{b}\in\Z^r$.
Then the set $S_1(A,\bm{b})$ of \emph{non-trivial} solutions is the subset of $S(A,\bm{b})$ with associated partitions coming from $\frakP(A)$, that is \[S_1(A,\bm{b}) = \{\bm{x}\in S(A,\bm{b}) : \frakp(\bm{x})\in\frakP(A)\}.\]
\end{defn}

This definition might seem quite arbitrary, but the interested reader is invited to read the discussions in~\cite{RSZ2018} and~\cite{Spiegel2020} which show that, in some sense, it is quite natural in that it encompasses the natural notions of non-triviality for specific systems of linear equations studied in the literature.
When we want to investigate the distribution of nontrivial solutions, we actually need to look at $c(A_\frakp)$ for any partition type $\frakp$ that is to be considered.
A special case in which it suffices to only consider $c(A)$ is that of \emph{strictly balanced} systems.

\begin{defn}\label{defn:strictlyBalanced}
Let $m>r$ be positive integers and $A\in\Z^{r\times m}$ be positive.
Then the matrix $A$ is called \emph{strictly balanced} if \[c(A)=\frac{m}{m-\rk(A)} > \max_{\emptyset\subsetneqq Q \subsetneqq [m]}\frac{|Q|}{|Q|-r_Q(A)},\] and furthermore for every $\frakp\in\frakP(A)\setminus\{\{1\},\{2\},\dots,\{m\}\}$ such that $A_\frakp$ is positive, it holds that $c(A)>c(A_\frakp)$.
\end{defn}

We can prove the following theorem about the distribution of nontrivial solutions of strictly balanced systems of linear equations.

\begin{thm}\label{thm:distHomStrictBal}
Let $m>r$ be positive integers, $A\in\Z^{r\times m}$ a positive and abundant strictly balanced integer matrix such that $S(A,\bm{0})\neq\emptyset$.
Furthermore, let $n$ be an integer, $0\leq p:=p(n)\leq 1$ and $X_n$ the random variable $|S_1(A,\bm{0})\cap[n]_p^m|$ that counts the number of non-trivial solutions $\bm{x}\in[n]_p^m$ to $A\bm{x}=\bm{0}$.
Then \[\tilde{X}_n \xrightarrow{d} \calN(0,1)\] if $n(1-p)\to\infty$ and $np^{c(A)}\to\infty$.
\end{thm}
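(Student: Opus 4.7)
The plan is to reduce Theorem~\ref{thm:distHomStrictBal} to Theorem~\ref{thm:distProper} by decomposing the non-trivial count $X_n$ according to the partition type of each solution. For each $\frakp\in\frakP(A)$ with $k=|\frakp|$ define
\begin{equation*}
Y_n^{\frakp} = \#\bigl\{\bm{x}\in[n]_p^m\cap S(A,\bm{0}) : \frakp(\bm{x})=\frakp\bigr\},
\end{equation*}
so that $X_n=\sum_{\frakp\in\frakP(A)} Y_n^\frakp$. A tuple $\bm{x}$ has partition type $\frakp$ precisely when its $k$ distinct values form a proper solution of the contracted system $A_\frakp\bm{y}=\bm{0}$; consequently $Y_n^{\frakp}$ has the same distribution as $|S_0(A_\frakp,\bm{0})\cap[n]_p^k|$. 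In particular, the summand indexed by the discrete partition $\frakp_0=\{\{1\},\ldots,\{m\}\}$ is exactly $|S_0(A,\bm{0})\cap[n]_p^m|$, and this term is the natural candidate to govern the asymptotics.

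First I would apply Theorem~\ref{thm:distProper} to the pair $(A,\bm{0})$---whose hypotheses of positivity, abundance, non-empty solution set, and $np^{c(A)}\to\infty$ are all at hand---to obtain $\tilde{Y}_n^{\frakp_0}\xrightarrow{d}\calN(0,1)$. Next, for each $\frakp\in\frakP(A)$ with $\frakp\neq\frakp_0$ and $A_\frakp$ positive, a standard first-moment calculation using $\rk(A_\frakp)=r:=\rk(A)$ yields $\Exp(Y_n^\frakp)=\Theta(n^{k-r}p^k)$, and the strict balance inequality $c(A)>c(A_\frakp)$ is algebraically equivalent to $n^{k-r}p^k=o(n^{m-r}p^m)$ in the regime $np^{c(A)}\to\infty$. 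An analogous second-moment estimate---invoking the strict inequality $c(A)>|Q|/(|Q|-r_Q(A))$ on proper column subsets $\emptyset\subsetneqq Q\subsetneqq[m]$ and the abundance of $A$ to rule out parasitic intersection patterns---gives both $\Var(Y_n^\frakp)=o(\Var(Y_n^{\frakp_0}))$ and $\Cov(Y_n^\frakp,Y_n^{\frakp'})=o(\Var(Y_n^{\frakp_0}))$ whenever at least one of $\frakp,\frakp'$ differs from $\frakp_0$. Combining these with the triangle inequality and Slutsky's theorem gives
\begin{equation*}
\frac{\Var(X_n)}{\Var(Y_n^{\frakp_0})}\to 1\qquad\text{and}\qquad \frac{X_n-\Exp(X_n)}{\sqrt{\Var(Y_n^{\frakp_0})}}= \tilde{Y}_n^{\frakp_0}+o_p(1),
\end{equation*}
so $\tilde{X}_n$ inherits the normal limit of $\tilde{Y}_n^{\frakp_0}$.

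The main obstacle is the treatment of partitions $\frakp\in\frakP(A)\setminus\{\frakp_0\}$ whose contracted matrix $A_\frakp$ is \emph{not} positive, since strict balance supplies no direct inequality in that case. I would handle these by arguing that non-positivity of $A_\frakp$ forces either $S_0(A_\frakp,\bm{0})\cap\N^k=\emptyset$ outright (via a sign or irredundancy obstruction precluding positive integer solutions with pairwise distinct entries, so $Y_n^\frakp=0$ almost surely) or else a dimensional collapse of the solution lattice which makes $\Exp(Y_n^\frakp)=o(\Exp(Y_n^{\frakp_0}))$ automatic. A secondary technical difficulty is making the covariance bounds uniform across all $\frakp$: pairs $(\bm{x},\bm{x}')$ must be classified by the joint partition they induce on $[2m]$, and strict balance must then be applied to each joint contracted matrix to control clusters of highly correlated solutions. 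Once these two points are pinned down, the reduction above finishes the proof.
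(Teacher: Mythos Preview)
Your approach is workable in principle but takes a genuinely different route from the paper's. The paper does \emph{not} reduce Theorem~\ref{thm:distHomStrictBal} to Theorem~\ref{thm:distProper}; instead both theorems are deduced as direct corollaries of the meta Theorem~\ref{thm:distribution}. For Theorem~\ref{thm:distHomStrictBal} one simply applies Theorem~\ref{thm:distribution} with $\frakP=\frakP(A)$ and checks its two extra hypotheses: first, that every relevant $\frakp$ has $A_\frakp$ positive; second, that $np^{c(A_\frakp)}\to\infty$ for every such $\frakp$. The latter is immediate from strict balance, since $c(A)>c(A_\frakp)$ for $|\frakp|<m$ forces $np^{c(A_\frakp)}\geq np^{c(A)}\to\infty$. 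The former is where your ``main obstacle'' dissolves completely.

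Here is the point you are missing, and it makes the obstacle a non-issue: because $\bm{b}=\bm{0}$ and we are only counting solutions in $[n]_p^m$, any partition $\frakp$ that actually contributes to $X_n$ admits some $\bm{y}\in S_0(A_\frakp,\bm{0})\cap [n]^{|\frakp|}$. Such a $\bm{y}$ lies in $\N^{|\frakp|}$ (so it witnesses the ``positive solution'' part of positivity) and has pairwise distinct entries (so it witnesses the irredundancy part for \emph{every} pair $i\neq j$ simultaneously). Hence $A_\frakp$ is positive automatically; there is no second ``dimensional collapse'' alternative to worry about, and no case analysis is required. This is exactly the one-line observation the paper makes.

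Your Slutsky decomposition would also succeed, but the variance and covariance bounds you gesture at are not free: carrying them out amounts to redoing, in the special case $\frakP=\{\frakp\}$ versus $\frakP=\{\frakp_0\}$, the compounded-matrix and rank computations that already live inside the proof of Theorem~\ref{thm:distribution}. So you would be recovering a corollary of the meta theorem by re-proving fragments of the meta theorem, rather than invoking it. (A minor inaccuracy: your claim that $c(A)>c(A_\frakp)$ is ``algebraically equivalent'' to $n^{k-r}p^k=o(n^{m-r}p^m)$ is off---that first-moment comparison only needs $np\to\infty$ and uses no strict balance at all; strict balance is really needed for the variance comparison, as you say later.)
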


Both of our main results can be compared to those proved in~\cite{RSZ2018} and~\cite{Spiegel2020}. In the former the authors investigated the threshold behavior of $|S_1(A,\bm{0})\cap[n]_p^m|$ when $A\in\Z^{r\times m}$ is a positive matrix and in the latter, Spiegel extended this to $|S_1(A,\bm{b})\cap[n]_p^m|$ for an arbitrary $\bm{b}\in\Z^r$ when $A$ is positive and also abundant.

In that sense, their results hold in a more general setting, even when compared to our main technical result, Theorem~\ref{thm:distribution}.
However, we note that the study of the distribution is more delicate than the threshold behavior, as can be seen by another result from~\cite{RSZ2018}, in which the authors explore the behavior at the threshold of nontrivial solutions to $A\bm{x}=\bm{0}$ for strictly balanced systems, that is, they work exactly in the same setting as in Theorem~\ref{thm:distHomStrictBal}.

To conclude this section, let us mention that the core of the proof is based on a combination of algebraic ideas dealing with the matrix associated with the linear system of equations and the method of moments in probability theory to show convergence towards a normal distribution. 
The second part is highly inspired by the techniques developed by Ruci\'{n}ski in~\cite{Rucinski1988} where he proved the following result on the distribution of the number of occurrences of a fixed graph $G$ as a subgraph in a binomial random graph.

\begin{thm}[\cite{Rucinski1988}]\label{thm:rucinski}
Let $G$ be a graph on $v(G)$ vertices and $e(G)$ edges.
Furthermore, let $n$ be an integer, $0\leq p := p(n)\leq 1$ and $X_n$ the random variable that the number of subgraphs of $G(n,p)$ that are isomorphic to $G$.
Then \[\tilde{X}_n \xrightarrow{d} \calN(0,1)\] if and only if $n^2(1-p)\to\infty$ and $np^{d(G)}\to\infty$, where $d(G)=\max_{H\subset G}e(H)/v(H)$ is the \emph{density} of $G$.
\end{thm}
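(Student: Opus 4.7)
The plan is to prove this via the method of moments, which is also the central probabilistic tool of the present paper. Write $X_n = \sum_\alpha Y_\alpha$, where $\alpha$ ranges over the labelled copies of $G$ in $K_n$ and $Y_\alpha = \prod_{e\in\alpha}\Ind[e \in G(n,p)]$ is the indicator that all edges of the copy $\alpha$ are present. Then $\Exp(X_n) = \Theta(n^{v(G)} p^{e(G)})$, and by grouping pairs $(\alpha,\beta)$ of copies according to the isomorphism type of the subgraph $H$ on which they intersect, one obtains
\[
\Var(X_n) = \sum_{H} \Theta\!\bigl(n^{2v(G)-v(H)}\, p^{2e(G)-e(H)}\bigr)\bigl(1 - p^{e(H)}\bigr),
\]
where $H$ runs over subgraphs of $G$ with at least one edge. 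Dominant contributions come from the densest subgraph $H^*$ achieving $d(G)=e(H^*)/v(H^*)$, while the factor $(1-p^{e(H)})$ for a single-edge $H$ is what gives its role to the hypothesis $n^2(1-p)\to\infty$.

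For sufficiency, I would show $\Exp(\tilde{X}_n^k)\to (k-1)!!$ for even $k$ and $\to 0$ for odd $k$, which by moment-determinacy of the normal distribution yields $\tilde X_n \xrightarrow{d} \calN(0,1)$. Expanding
\[
\Exp\bigl((X_n - \Exp X_n)^k\bigr) = \sum_{\alpha_1,\ldots,\alpha_k}\Exp\prod_{i=1}^k \bigl(Y_{\alpha_i} - \Exp Y_{\alpha_i}\bigr),
\]
classify the $k$-tuples by the intersection hypergraph among the $\alpha_i$. A tuple contributes zero unless every $\alpha_i$ shares an edge with at least one other $\alpha_j$, since otherwise the centred factor corresponding to the isolated copy is independent from the rest and has mean zero. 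The dominant contribution comes from \emph{pairing} tuples, in which the $k$ copies split into $k/2$ disjoint pairs whose intersections match those producing the variance-dominant term; a direct count gives exactly $(k-1)!!\,\Var(X_n)^{k/2}$. All other intersection patterns must be shown to contribute $o(\Var(X_n)^{k/2})$, and this is where the hypothesis $np^{d(G)}\to\infty$ gets used.

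Necessity splits into two cases. If $np^{d(G)}\not\to\infty$, then for a densest subgraph $H^*$ of $G$ the count of copies of $H^*$ in $G(n,p)$ has (along a subsequence) a Poisson-type limit by a standard second-moment/Chen--Stein argument; since every copy of $G$ contains a copy of $H^*$, these non-Gaussian fluctuations infect $X_n$. If $n^2(1-p)$ is bounded, then the complement $\overline{G(n,p)}$ has $O(1)$ edges in expectation, so $X_n$ is, up to negligible error, a polynomial in finitely many edge-\emph{absence} indicators, which cannot converge to a normal distribution.

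The main technical obstacle is the combinatorial enumeration underlying the sufficiency argument: for every non-pairing intersection pattern of $k$-tuples one must bound both the number of realising tuples and their probability weight, and verify that their product is negligible compared to $\Var(X_n)^{k/2}$. This is precisely the step where the density parameter $d(G)$ enters and it is the analogue of the manipulations that the present paper carries out, in Section~\ref{sec:prelimLems} and onwards, with submatrices $A^Q$ and the ratio entering $c(A)$ taking the place of subgraphs $H\subseteq G$ and the ratio $e(H)/v(H)$.
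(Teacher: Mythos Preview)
The paper does not contain a proof of this statement: Theorem~\ref{thm:rucinski} is quoted from Ruci\'nski's 1988 paper as motivation and background, not re-proved here. So there is nothing in the paper to compare your proposal against directly.

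That said, your sketch is an accurate outline of Ruci\'nski's original argument, and it is precisely the template that the present paper adapts in Section~\ref{sec:mainProof} to prove its own main result, Theorem~\ref{thm:distribution}. The three-case split according to whether $p\to a\in(0,1)$, $a=1$, or $a=0$ that you would need in the sufficiency direction is exactly what the paper carries out in Sections~\ref{proof: case0<a<1}--\ref{proof: casea=0}, with solutions to $A\bm{x}=\bm{b}$ in place of copies of $G$ and the parameter $c(A)$ in place of $d(G)$. Your identification of the pairing tuples as the dominant contribution, and of the need to bound all other intersection patterns by $o(\Var(X_n)^{k/2})$, matches both Ruci\'nski's proof and the paper's adaptation. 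One refinement worth noting: in the regime $p\to 1$ the dominant structures are not just pairings but more general \emph{sunflower} configurations (what the paper calls ``milky ways''), and the analysis there proceeds via the vertex cover number of the intersection hypergraph rather than directly via pairings; your sketch glosses over this subtlety, though it does not affect the overall correctness of the plan.
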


The proof of our main result, Theorem~\ref{thm:distribution} which in particular implies Theorems~\ref{thm:distProper} and~\ref{thm:distHomStrictBal} follows a similar structure as that of Theorem~\ref{thm:rucinski}.
Specifically, in order to show that the stated conditions on $p$ are sufficient, one considers three different ranges of $p$ and determines the exact structure of the objects that actually affect the moments.

It turns out that when viewed through the lens of hypergraphs, these important objects will actually be essentially the same for both graphs and systems of linear equations.
However, we want to stress that actually establishing this fact requires the delicate study of the more difficult linear algebraic structure of the patterns we want to take into account, and hence our work highly differs from~\cite{Rucinski1988}.

\paragraph{Plan of the paper} 
In Section~\ref{sec:prelimLems} we will state some needed prior results from~\cite{RSZ2018} and~\cite{Spiegel2020}, as well as prove intermediate lemmas that are needed to establish a meta result, namely Theorem~\ref{thm:distribution} from which one can deduce Theorems~\ref{thm:distProper} and~\ref{thm:distHomStrictBal}. 
Section~\ref{sec:mainProof} will then be devoted entirely to stating and proving this main meta result, Theorem~\ref{thm:distribution}.
To do this, we will be using the so-called method of moments to analyze three distinct regimes of the probability $p$.
We conclude with a discussion on some remaining open problems in Section~\ref{sec:conclusion}.
In particular, we will investigate the question of whether the sufficient conditions on $p$ in Theorems~\ref{thm:distProper},~\ref{thm:distHomStrictBal} and~\ref{thm:distribution} are also necessary.

\section{Algebraic properties of systems of linear equations}\label{sec:prelimLems}

Let us start by investigating the relation between proper and non-trivial solutions.
For any $A\in\Z^{r\times m}$ and $\bm{b}\in\Z^r$ it is clear that \[S_0(A,\bm{b}) \subset S_1(A,\bm{b}) \subset S(A,\bm{b}).\]
Furthermore, suppose $s>1$ and we fix a specific $(C_1,\dots,C_s)=\frakp\in\frakP(A)$ such that $\min C_i<\min C_j$ whenever $i<j$.
Then if $\bm{x}=(x_1,\dots,x_m)\in S_1(A,\bm{b})$ satisfies $\frakp(\bm{x})=\frakp$, we see that $\tilde{\bm{x}}=(x_{\min C_1},\dots,x_{\min C_s})\in\Z^{s}$ has $s$ pairwise distinct coordinates and $A_\frakp \tilde{\bm{x}} = \bm{b}$, so $\tilde{\bm{x}}\in S_0(A_\frakp,\bm{b})$.
Conversely, for any proper solution $\bm{x}=(x_1,\dots,x_s)$ in $S_0(A_\frakp,\bm{b})$, we see that the vector $\tilde{\bm{x}}\in\Z^m$ with $i$-th coordinate $\tilde{x}_i$ defined as \[\tilde{x}_i = \sum_{j=1}^s x_j\cdot\delta_{i\in C_j}\] satisfies $\frakp(\tilde{\bm{x}}) = \frakp$ and $A\tilde{\bm{x}}=\bm{b}$ (here $\delta_{i\in C_j}$ denotes the indicator function which takes values $0$ or $1$ if $i\not\in C_j$ and $i\in C_j$, respectively).
So we get the following lemma, which is essentially Lemma~1.10 in~\cite{Spiegel2020}.

\begin{lemma}[\cite{Spiegel2020}]\label{lemma:nontrivial_reduced_proper}
Let $m>r$ be positive integers, $A\in\Z^{r\times m}$, and $\bm{b}\in\Z^r$.
Then for any partition $\frakp\in\frakP(A)$ and any subset $T\subset \Z$, it holds that \[|\{\bm{x} \in S_1(A,\bm{b}) : \frakp(\bm{x})=\frakp\}\cap T^m| = |S_0(A_\frakp,\bm{b})\cap T^{|\frakp|}|.\]
\end{lemma}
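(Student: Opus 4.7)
The statement is essentially a bookkeeping identity, so the plan is to exhibit an explicit bijection between the two sets and verify it is well-defined in both directions. Write $\frakp = (C_1,\dots,C_s)$ with $s = |\frakp|$ and $\min C_1 < \cdots < \min C_s$, as described just before the lemma.

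First I would define the forward map $\Phi \colon \{\bm{x} \in S_1(A,\bm{b}) : \frakp(\bm{x}) = \frakp\} \cap T^m \to S_0(A_\frakp,\bm{b}) \cap T^{|\frakp|}$ by $\Phi(\bm{x}) = (x_{\min C_1}, \dots, x_{\min C_s})$, exactly as in the discussion preceding the lemma. I need to check three things about $\Phi(\bm{x})$: (i) its entries lie in $T$, which is immediate from $\bm{x} \in T^m$; (ii) its entries are pairwise distinct, which follows from the definition of $\frakp(\bm{x}) = \frakp$, since indices $\min C_i$ and $\min C_j$ lie in different blocks for $i \neq j$ and therefore $x_{\min C_i} \neq x_{\min C_j}$; and (iii) $A_\frakp \Phi(\bm{x}) = \bm{b}$, which drops out of $A\bm{x} = \bm{b}$ by combining the columns of $A$ according to the blocks of $\frakp$, since within a block all coordinates of $\bm{x}$ agree.

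Next I would define the reverse map $\Psi \colon S_0(A_\frakp,\bm{b}) \cap T^{|\frakp|} \to \{\bm{x} \in S_1(A,\bm{b}) : \frakp(\bm{x}) = \frakp\} \cap T^m$ by $\Psi(\bm{y})_i = y_j$ whenever $i \in C_j$. Again three routine checks: $\Psi(\bm{y})$ has all entries in $T$; $A\Psi(\bm{y}) = A_\frakp \bm{y} = \bm{b}$ by the same column-grouping identity; and $\frakp(\Psi(\bm{y})) = \frakp$ because the $y_j$ are pairwise distinct (so indices from different blocks receive different values, while indices within the same block automatically agree). The fact that $\frakp \in \frakP(A)$ ensures $\Psi(\bm{y})$ actually lands in $S_1(A,\bm{b})$, since $\rk(A_\frakp) = \rk(A)$ is exactly the non-triviality condition on the induced partition.

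Finally I would observe that $\Phi \circ \Psi = \id$ and $\Psi \circ \Phi = \id$: the former holds because $\Psi(\bm{y})_{\min C_j} = y_j$ by construction, and the latter because any $\bm{x}$ with partition $\frakp$ is constant on each block $C_j$, taking value $x_{\min C_j}$. There is no real obstacle here; the only slightly delicate point is making sure the equivalence of $A\bm{x} = \bm{b}$ with $A_\frakp \tilde{\bm{x}} = \bm{b}$ is invoked correctly, i.e.\ recognizing that the column-sum definition of $A_\frakp$ was chosen precisely so that this equivalence becomes a tautology.
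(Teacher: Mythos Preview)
Your proposal is correct and follows precisely the same approach as the paper: the discussion immediately preceding the lemma defines the forward map $\bm{x}\mapsto (x_{\min C_1},\dots,x_{\min C_s})$ and the reverse map via $\tilde{x}_i = \sum_j x_j\,\delta_{i\in C_j}$, exactly your $\Phi$ and $\Psi$, and notes the same verifications. The paper does not give a separate proof environment because it treats the lemma as already established by that discussion (and cites \cite{Spiegel2020}); your write-up simply makes the bijection and the inverse checks explicit.
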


Lemma~\ref{lemma:nontrivial_reduced_proper} will be very helpful because if $A$ is positive, then the approximate size of $S_0(A,\bm{b})$ is not difficult to determine for any vector $\bm{b}$.
Specifically, we have the following result which is Lemma~1.4 in~\cite{Spiegel2020}.

\begin{lemma}[\cite{Spiegel2020}]\label{lemma:proper_sols_count}
Let $m>r$ be positive integers, $A\in\Z^{r\times m}$, and $\bm{b}\in\Z^r$ such that $S(A,\bm{b})\neq\emptyset$.
Then if $n\in\N$ it holds that \[|S_0(A,\bm{b})\cap[n]^m| \leq |S(A,\bm{b})\cap[n]^m|\leq n^{m-\rk(A)}.\]
Furthermore, if $A$ is positive we also have \[|S_0(A,\bm{b})\cap[n]^m| = \Omega(n^{m-\rk(A)}).\]
\end{lemma}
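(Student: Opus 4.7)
The two trivial containments $S_0(A,\bm{b})\subseteq S(A,\bm{b})\subseteq\Z^m$ give the first inequality for free, so the real work lies in the upper bound $|S(A,\bm{b})\cap[n]^m|\leq n^{m-\rk(A)}$ and in the lower bound under positivity. For the upper bound I would fix a set $B\subseteq[m]$ of $\rk(A)$ indices whose columns form a basis of the column space of $A$. For each of the $n^{m-\rk(A)}$ assignments of values in $[n]$ to the remaining variables $\bm{x}^{[m]\setminus B}$, the system reduces to $A^B\bm{x}^B=\bm{b}-A^{[m]\setminus B}\bm{x}^{[m]\setminus B}$; since $A^B$ has full column rank, this admits at most one rational solution, hence at most one integer completion, and the bound follows.

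For the lower bound, fix any $\bm{x}_0\in S(A,\bm{b})\cap\Z^m$ and set $L=\{\bm{v}\in\Z^m : A\bm{v}=\bm{0}\}$, which by Smith normal form is a sublattice of $\Z^m$ of rank $d:=m-\rk(A)$. Positivity supplies a strictly positive vector $\bm{y}\in L\cap\N^m$, and I extend $\bm{y}$ to an integer family $\bm{y},\bm{v}_2,\dots,\bm{v}_d\in L$ linearly independent over $\Q$ (obtained by extending $\bm{y}$ to a rational basis of the kernel and clearing denominators). The key geometric observation is that because $\bm{y}$ is coordinatewise positive, one can choose constants $0<c_1<c_2$ and $\varepsilon>0$ depending only on $\bm{x}_0$ and $\bm{y},\bm{v}_2,\dots,\bm{v}_d$ so that every integer tuple $(t,a_2,\dots,a_d)\in\Z^d$ with $t\in[c_1n,c_2n]$ and $|a_i|\leq\varepsilon n$ produces a vector $\bm{x}_0+t\bm{y}+\sum_{i=2}^d a_i\bm{v}_i\in[n]^m$; since distinct tuples give distinct kernel elements, this yields $\Omega(n^d)$ distinct integer solutions in $[n]^m$.

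Finally I would discard the non-proper ones. For each pair $i<j$, the irredundancy clause of positivity supplies a kernel vector $\bm{z}\in L$ with $z_i\neq z_j$, so the linear form $\bm{v}\mapsto v_i-v_j$ is not identically zero on the $\Q$-span of $\bm{y},\bm{v}_2,\dots,\bm{v}_d$. The condition $x_i=x_j$ on the parameterised solutions $\bm{x}=\bm{x}_0+t\bm{y}+\sum_k a_k\bm{v}_k$ thus translates to a nontrivial affine equation in $(t,a_2,\dots,a_d)$, whose integer solutions inside our box form a $(d-1)$-dimensional affine slice and number at most $O(n^{d-1})$. Summing over the $\binom{m}{2}$ pairs, the total loss is $O(n^{d-1})$, leaving $\Omega(n^d)=\Omega(n^{m-\rk(A)})$ proper solutions.

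The main obstacle is exactly this lower bound, and it is where both clauses of positivity pull their weight: without the strictly positive kernel vector $\bm{y}$ there is no guarantee that the parameter box intersected with $[n]^m$ is nonempty in every coordinate simultaneously, and without irredundancy some pair $(i,j)$ might satisfy $v_i=v_j$ for every kernel vector $\bm{v}$, forcing $x_i=x_j$ identically and collapsing the proper count to zero.
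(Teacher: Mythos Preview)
Your argument is correct. The paper does not actually prove this lemma but quotes it from \cite{Spiegel2020} (Lemma~1.4 there), so there is no in-paper proof to compare against; your treatment is the standard one and matches the argument in the cited source. The upper bound via a column basis and the lower bound via translating by a positive kernel vector and then perturbing within a box in kernel coordinates are exactly the expected steps, and your use of the irredundancy clause to ensure that each coincidence $x_i=x_j$ cuts out a proper affine hyperplane in the parameter space (hence costs only $O(n^{d-1})$ solutions) is the right way to pass from $S(A,\bm{b})$ to $S_0(A,\bm{b})$.
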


In~\cite{RSZ2018} the authors managed to find a threshold probability function for when non-trivial solutions to a homogeneous system of linear equations appear in $[n]_p$, but for our current purposes we need to make one more specification regarding types of solutions.
The issue essentially lies in the fact that while $A$ is positive, the same might not hold for $A_\frakp$ for some $\frakp\in\frakP(A)$.
Note that this can only happen in the case $\bm{b}\neq \bm{0}$, since clearly $S_0(A_\frakp,\bm{0})\neq\emptyset$ implies that $A_\frakp$ is positive.
As a problematic example, consider for instance the matrix $A=\begin{pmatrix}1 &1 &1 &1 &-1\end{pmatrix}$ which is positive since for instance $\bm{x}=(1,2,3,4,10)\in \N^5$ is a proper solution to the system $A\bm{x}=\bm{0}$.
But taking $\bm{b}=6$, we see that $\bm{y}=(1,2,3,3,3)$ satisfies $A\bm{y}=\bm{b}$.
We have $\frakp(\bm{y})=(\{1\},\{2\},\{3,4,5\})$ and hence $A_{\frakp(\bm{y})} = \begin{pmatrix}1 &1 &1\end{pmatrix}$ which is clearly not positive.

The problem arises as follows: if $A_\frakp$ is not positive but we still have a significant number of solutions of this type -- this can for instance be achieved by using the previous example as a gadget in a larger system -- the number of solutions in the random binomial set will depend heavily on whether or not a specific bounded number of elements are included, and so getting any good results on their distribution is unlikely.

Having stated this motivation, we first introduce the concept of \emph{positive partitions} as the subset $\frakP_0(A)\subset \frakP(A)$ such that
\begin{equation*}\label{eq:positive_partitions}
\frakP_0(A) = \{\frakp\in\frakP(A) : A_\frakp \text{ is positive}\}.
\end{equation*}
For any $\frakP\subset \frakP(A)$ we can now define the concept of \emph{type-$\frakP$} solutions.

\begin{defn}\label{defn:typeP-solutions}
Let $m>r$ be positive integers, $A\in\Z^{r\times m}$, $\bm{b}\in\Z^r$, and $\frakP\subset\frakP(A)$.
Then the set $S_\frakP(A,\bm{b})$ of \emph{type-$\frakP$} solutions is the subset of $S(A,\bm{b})$ with associated partitions coming from $\frakP$, that is
\[S_\frakP(A,\bm{b}) = \{\bm{x}\in S(A,\bm{b}) : \frakp(\bm{x})\in\frakP\}.\]
If $\frakP=\{\frakp\}$, we will write $S_\frakp(A,\bm{b})$ instead of $S_{\{\frakp\}}(A,\bm{b})$.
\end{defn}

Non-trivial solutions are therefore the same as type-$\frakP(A)$ solutions.
The following is a straightforward consequence of Lemma~\ref{lemma:nontrivial_reduced_proper} and the upper bound in Lemma~\ref{lemma:proper_sols_count}.

\begin{lemma}\label{lemma:sols_with_fixed_elts}
Let $m>r$ and $n$ be positive integers, $A\in\Z^{r\times m}$ positive, $\bm{b}\in\Z^r$, $\frakP\subset\frakP_0(A)$, and $Z\subset [n]$ a fixed set.
Then \[\big|\{\bm{y}\in S_\frakP(A,\bm{b})\cap[n]^m : \{\bm{y}\}\cap Z \neq \emptyset\}\big| = O(n^{m-\rk(A)-1}).\]
Furthermore, if $A$ is also abundant, then \[\big|\{\bm{y}\in S_\frakP(A,\bm{b})\cap[n]^m : |\{\bm{y}\}\cap Z|\geq 2\}\big| = O(n^{m-\rk(A)-2}).\]
\end{lemma}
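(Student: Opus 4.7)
The plan is to prove both bounds by a union bound over the coordinates where an element of $Z$ can appear, followed by the upper bound of Lemma~\ref{lemma:proper_sols_count} applied to the reduced linear system in which those coordinates have been substituted by their fixed values. Since $S_\frakP(A,\bm{b})\subseteq S(A,\bm{b})$, I would immediately drop the partition constraint and bound the two cardinalities inside the larger set $S(A,\bm{b})\cap[n]^m$. The parameters $m$, $r$, and $|Z|$ are constants independent of $n$ and I would absorb them into the $O$-notation.

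For part (a), any $\bm{y}\in S(A,\bm{b})\cap[n]^m$ with $\{\bm{y}\}\cap Z\neq\emptyset$ satisfies $y_i=z$ for some $(i,z)\in[m]\times Z$. Fixing such a pair, the restriction $\bm{y}^{[m]\setminus\{i\}}\in[n]^{m-1}$ is an integer solution of the reduced system
\[ A^{[m]\setminus\{i\}}\bm{x} = \bm{b} - z\bm{c}_i, \]
where $\bm{c}_i$ denotes the $i$-th column of $A$. By the upper bound in Lemma~\ref{lemma:proper_sols_count} there are at most $n^{(m-1)-\rk(A^{[m]\setminus\{i\}})}$ such vectors. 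The only algebraic input I then need is $\rk(A^{[m]\setminus\{i\}})=\rk(A)$, which follows from positivity of $A$: a strictly positive integer solution to $A\bm{w}=\bm{0}$ exhibits every column as a linear combination of the others. This is precisely the inequality $\rk(A^{\widebar{Q}})\geq\rk(A)-|Q|+1$ from the remark following Definition~\ref{defn:systemdensity}, applied with $|Q|=1$. Summing over the $O(1)$ pairs $(i,z)$ produces the advertised bound $O(n^{m-\rk(A)-1})$.

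Part (b) proceeds in exactly parallel fashion. A solution with $|\{\bm{y}\}\cap Z|\geq 2$ has two coordinates $y_i\neq y_j$ both lying in $Z$, so I would union-bound over the $O(1)$ tuples $(i,j,z_1,z_2)$ with $i\neq j$ in $[m]$ and $z_1\neq z_2$ in $Z$, and for each such tuple bound the number of $\bm{y}\in S(A,\bm{b})\cap[n]^m$ with $y_i=z_1$ and $y_j=z_2$ by $n^{(m-2)-\rk(A^{[m]\setminus\{i,j\}})}$ via Lemma~\ref{lemma:proper_sols_count}. Now abundance of $A$ is exactly the hypothesis needed to guarantee $\rk(A^{[m]\setminus\{i,j\}})=\rk(A)$, so summing delivers $O(n^{m-\rk(A)-2})$.

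There is no serious obstacle in this argument: the whole content is the bookkeeping of how many coordinates are fixed, and the only nontrivial point is that positivity (one column removed) and abundance (two columns removed) are precisely the rank-preservation conditions that let Lemma~\ref{lemma:proper_sols_count} deliver the two extra powers of $n^{-1}$. Incidentally, the restriction $\frakP\subseteq\frakP_0(A)$ plays no role in the proof I would write, so the bound actually holds for every $\frakP\subseteq\frakP(A)$; it is presumably included only for convenient use in later sections.
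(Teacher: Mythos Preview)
Your proof is correct and uses the same underlying idea as the paper's argument---a union bound over the positions of the $Z$-elements followed by the upper bound of Lemma~\ref{lemma:proper_sols_count} on the reduced system---but you carry it out at a slightly different level. The paper first passes through Lemma~\ref{lemma:nontrivial_reduced_proper} to reduce each type-$\frakp$ solution to a proper solution of the compressed system $A_\frakp\bm{x}=\bm{b}$, and then fixes the $Z$-coordinates there; this is why it needs the positivity of $A_\frakp$ (i.e.\ $\frakp\in\frakP_0(A)$) to control $\rk(A_\frakp^{\widebar{Q}})$. You instead stay with the full matrix $A$ throughout, using only the positivity (and, for part~(b), abundance) of $A$ itself to preserve the rank after removing one or two columns. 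Your route is marginally more direct and, as you note, shows that the hypothesis $\frakP\subseteq\frakP_0(A)$ is not actually needed for this lemma. The paper's detour through $A_\frakp$ does buy one thing: the intermediate inequality~\eqref{eq:sols_with_fixed_elts_positive} with the sharper term $|\frakp|$ in place of $m$, which is reused verbatim later in the proof of Case~1 to show that non-proper partitions are negligible.
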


\begin{proof}
We will consider each partition $\frakp\in\frakP$ separately. Due to Lemma~\ref{lemma:nontrivial_reduced_proper}  we can consider $\bm{y}\in S_0(A_\frakp,\bm{b})\cap [n]^{|\frakp|}$.
If $\bm{y}$ contains an element of $Z$, then there exists some non-empty index set $Q\subset[|\frakp|]$ and a vector $\bm{z}\in Z^{|Q|}$ such that $\bm{y}^{\widebar{Q}} \in S_0(A_\frakp^{\widebar{Q}}, \bm{b}-A_\frakp^Q\bm{z})\cap [n]^{|\frakp|-|Q|}$ (note that here, $\widebar{Q}=[|\frakp|]\setminus Q$).
By Lemma~\ref{lemma:proper_sols_count}, we have
 \[\big|S_0(A_\frakp^{\widebar{Q}}, \bm{b}-A_\frakp^Q\bm{z})\cap [n]^{|\frakp|-|Q|}\big| \leq n^{|\frakp|-|Q|-\rk(A_\frakp^{\widebar{Q}})}.\]
$A_\frakp$ is positive, which implies that \[\rk(A_\frakp^{\widebar{Q}}) \geq \rk(A_\frakp)-|Q|+1=\rk(A)-|Q|+1.\]
Indeed, the positivity implies that the removal of any single column will not change the rank of the resulting matrix, and since the rank of a matrix is the dimension of its column space, any subsequent removal decreases the rank by at most $1$.
Hence
\begin{equation}\label{eq:sols_with_fixed_elts_positive}
|\frakp|-|Q|-\rk(A_\frakp^{\widebar{Q}}) \leq |\frakp|-\rk(A)-1 \leq m-\rk(A)-1.
\end{equation}
Since there are only $O(1)$ choices for $\frakp$, $Q$ and $\bm{z}$, this implies the result.
For the second part, note first that for any $|\frakp|<m$, Relation~\eqref{eq:sols_with_fixed_elts_positive} actually implies the statement already, so it only remains to consider the case $|\frakp|=m$, that is, $\bm{y}\in S_0(A,\bm{b})\cap[n]^m$.
If $A$ is abundant, the removal of any two columns keeps the rank constant, while any subsequent removal will decrease it by at most $1$ at a time.
Hence for any $Q\subset[m]$ with $|Q|\geq 2$ we have \[\rk(A^{\widebar{Q}})\geq \rk(A)-|Q|+2,\] and so \[m-|Q|-\rk(A^{\widebar{Q}}) \leq m-\rk(A)-2,\] which is what we wanted to show.
\end{proof}

\subsection{Compounded matrices}\label{ssec:compounde-matrices}

While Lemma \ref{lemma:sols_with_fixed_elts} showed that the upper bound of Lemma~\ref{lemma:proper_sols_count} is always helpful in the situation of counting solutions with some entries fixed beforehand, the requirement of positivity is sometimes too restrictive to do the same for the lower bound.
Suppose $\bm{x}\in S_1(A,\bm{b})$ is some fixed non-trivial solution. We want then to count the number of solutions $\bm{y}\in S_1(A,\bm{b})$ that intersect $\bm{x}$, that is, the size of the set $S_0(A_{\frakp(\bm{y})}^{\widebar{Q}}, \bm{b}-A_{\frakp(\bm{y})}^Q\tilde{\bm{x}})$, where $Q\subset [|\frakp(\bm{y})|]$ is some index set and $\tilde{\bm{x}}\in\{\bm{x}\}^{|Q|}$.
Lemma~\ref{lemma:sols_with_fixed_elts} with $Z=\{\bm{x}\}$ immediately gives helpful upper bounds, but there are two issues when trying to apply the lower bound of Lemma~\ref{lemma:proper_sols_count} directly.

The first is that when summing over several distinct $Q$, the same solution will be counted multiple times, but this could be alleviated by just counting a single $Q$ that maximizes the cardinality of the corresponding set.
The bigger issue is that it is not clear at all that the matrix $A_{\frakp(\bm{y})}^{\widebar{Q}}$ will be positive, and in fact this is not true in general even when $A_{\frakp(\bm{y})}$ is itself positive.
Consider for instance the matrix $A=\begin{pmatrix}1 &1 &-1\end{pmatrix}$ (associated with the \emph{Schur equation} $x+y=z$), which is both positive and abundant.
Then for any $Q$ with $|Q|=1$, the equation $A^{\widebar{Q}}(x,y)=\bm{0}$ implies either $x=y$ or $x=-y$, so in either case positivity will be violated.

Instead, we will consider the concept of the \emph{compounded} matrix already used in~\cite{RSZ2018} to study the distribution at the threshold.
For matrices $A\in\Z^{r_A\times m_A}$, $B\in\Z^{r_B\times m_B}$ and a bijection $M\colon P\to [m_B]$ with $P=\{p_1<\dots<p_{|P|}\}\subset [m_A]$, define the $(r_A+r_B)\times(m_A+m_B-|P|)$ matrix $A\overset{M}{\times}B$ as
\begin{equation*}\label{eq:compoundedMatrix}
A\overset{M}{\times}B=\left(
\begin{array}{c|c|c|c|c}
A^{[m_A]\setminus P} &\bm{a}_{p_1} &\dots &\bm{a}_{p_{|P|}} &\bm{0}\\
\hline
\bm{0} &\bm{b}_{M(p_1)} &\dots &\bm{b}_{M(p_{|P|})} &B^{[m_B]\setminus M(P)} \tstrut
\end{array}
\right),
\end{equation*}
where $\bm{a}_1,\dots,\bm{a}_{m_A}$ denote the columns of $A$ and $\bm{b}_1,\dots,\bm{b}_{m_B}$ the columns of $B$.
Note that while we can apply this operator iteratively, the operation is in general not associative or even well-defined, that is \[(A\overset{M}{\times}B)\overset{M'}{\times}C \neq A\overset{M}{\times}(B\overset{M'}{\times}C).\]
An exception to this is the case when $\dom(M)=\emptyset$, which we will abbreviate by writing $A\overset{\bm{.}}{\times}B$.
In general, whenever no parentheses are used the compounded matrix is implied to be constructed from left to right iteratively, that is $A\overset{M}{\times}B\overset{M'}{\times}C:=(A\overset{M}{\times}B)\overset{M'}{\times}C$. 
An important example that will appear often concerns the case $A=B$ and $M=id_Q$ the identity function for some index set $Q\subset [m_A]$, that is, \[A\overset{\id_Q}{\times}A = \left(\begin{array}{c|r|c}A^{\widebar{Q}} &A^Q &\bm{0}\\ \hline \bm{0} &A^Q &A^{\widebar{Q}}\tstrut\end{array}\right).\]
The reason to study these matrices is natural: Let $\rho_A=\rho_A(M)\colon[m_A]\to[m_A]$ be the bijective function that maps the column indices of the first $[m_A]$ columns of $A\overset{M}{\times}A$ to the corresponding ones of $A$, and define $\rho_B\colon [m_A+m_B-|P|]\setminus [m_A-|P|]\to [m_B]$ similarly.
Recall that for any integer $t>1$, $\bm{b}^t$ denotes the vector $(\bm{b},\dots,\bm{b})\in\Z^{rt}$.
Then $\bm{z}=(z_1,\dots,z_{m_A+m_B-|P|})$ is a proper integer solution to the system of linear equations $(A\overset{M}{\times}B)\bm{z}=\bm{b}^2$ if and only if \[\bm{x}=(x_1,\dots,x_{m_A}):=(z_{\rho^{-1}_A(1)},\dots,z_{\rho^{-1}_A(m_A)}) \text{ and } \bm{y}=(y_1,\dots,y_{m_B}):=(z_{\rho^{-1}_B(1)},\dots,z_{\rho^{-1}_A(m_B)})\] are proper solutions to the systems $A\bm{x}=\bm{b}$ and $B\bm{y}=\bm{b}$, and for any $i\in[m_A]$ and $j\in[m_B]$ it holds that $x_i=y_j$ if and only if $i\in P$ and $j=M(i)$.
In other words, elements of $S_0(A\overset{M}{\times}B,\bm{b}^2)$ correspond to pairs of proper solutions in $S_0(A,\bm{b})$ and $S_0(B,\bm{b})$ that intersect exactly in the coordinates indicated by the function $M$.

We first state an easy but important property that any compounded matrix satisfies:

\begin{lemma}\label{lemma:compMatRankLB}
Let $m_A>r_A$ and $m_B>r_B$ be positive integers, $A\in\Z^{r_A\times m_A}$, $B\in\Z^{r_B\times m_B}$, $P\subset [m_A]$, $M\colon P\to [m_B]$ a bijection and $Q\subset [m_A+m_B-|P|]$.
If we define $Q_1 = Q\cap [m_A]$ and $Q_2=Q\setminus Q_1$, then \[\rk\bigl((A\overset{M}{\times}B)^{\widebar{Q}}\bigr) \geq \rk(A^{[m_A]\setminus Q_1})+\rk(B^{\widebar{M(P)}\setminus Q_2}).\]
In particular, $\rk(A\overset{M}{\times}B) \geq \rk(A)+\rk(B^{\widebar{M(P)}})$.
\end{lemma}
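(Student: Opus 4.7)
The plan is to leverage the block structure of $A\overset{M}{\times}B$ via a rank--nullity argument applied to its column space. Write each column of $(A\overset{M}{\times}B)^{\widebar{Q}}$ as a pair $\binom{\bm{u}}{\bm{v}}$ with $\bm{u}\in\Q^{r_A}$ and $\bm{v}\in\Q^{r_B}$, and let $C\subset\Q^{r_A+r_B}$ denote the (rational) column space of this submatrix. Let $\pi_A\colon \Q^{r_A+r_B}\to\Q^{r_A}$ be the projection onto the first $r_A$ coordinates and set $K_A = C\cap\ker\pi_A$. By rank--nullity, $\dim C = \dim\pi_A(C)+\dim K_A$, so it suffices to show that $\dim\pi_A(C) = \rk(A^{[m_A]\setminus Q_1})$ and $\dim K_A \geq \rk(B^{\widebar{M(P)}\setminus Q_2})$.

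For the first identification, inspection of the displayed form of $A\overset{M}{\times}B$ shows that, restricted to the top $r_A$ rows, the first $m_A$ columns are a permutation of the columns of $A$ (governed by the bijection $\rho_A$ introduced just before the lemma), while the last $m_B-|P|$ columns are zero on top. Removing the columns indexed by $Q$ therefore leaves, in the top block, exactly the columns of $A^{[m_A]\setminus Q_1}$ (after identifying $Q_1\subset[m_A]$ with its image under $\rho_A$, consistently with the convention of the statement). Hence the span of the $\bm{u}$-parts is the column space of $A^{[m_A]\setminus Q_1}$, giving $\dim\pi_A(C) = \rk(A^{[m_A]\setminus Q_1})$.

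For the lower bound on $\dim K_A$, note that the last $m_B-|P|$ columns of the compounded matrix have the form $\binom{\bm{0}}{\bm{b}_j}$ with $j\in[m_B]\setminus M(P)$, so the ones not removed by $Q_2$ lie entirely in $\ker\pi_A$, and their bottom parts span the column space of $B^{\widebar{M(P)}\setminus Q_2}$. Since these columns already have zero top coordinate, any linear dependence among them is equivalent to a linear dependence of their bottom parts in $\Q^{r_B}$, so one can extract $\rk(B^{\widebar{M(P)}\setminus Q_2})$ linearly independent vectors in $K_A$. Combining the two estimates via rank--nullity yields the claimed inequality, and the ``in particular'' assertion follows by specializing to $Q=\emptyset$.

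There is no substantive obstacle here; the only point requiring some care is the bookkeeping between the column indices of $A\overset{M}{\times}B$ and those of $A$ and $B$ through the bijections $\rho_A$ and $\rho_B$, and once this identification is fixed the argument follows directly from the block shape of the compounded matrix.
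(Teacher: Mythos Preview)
Your proof is correct and amounts to the same argument as the paper's, only repackaged through rank--nullity. The paper explicitly exhibits $r_1=\rk(A^{[m_A]\setminus Q_1})$ columns from the first block (chosen so that their top $r_A$ rows are independent) together with $r_2=\rk(B^{\widebar{M(P)}\setminus Q_2})$ columns from the last block, and then checks directly that these $r_1+r_2$ columns are independent by first looking at the top $r_A$ coordinates; your projection $\pi_A$ and the decomposition $\dim C=\dim\pi_A(C)+\dim K_A$ encode exactly the same two-step verification.
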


\begin{proof}
It is clear that the rank of $(A\overset{M}{\times}B)^{[m_A]\setminus Q_1}$ is at least $rk(A^{[m_A]\setminus Q_1})=:r_1$, and similarly the rank of $(A\overset{M}{\times}B)^{[m_A+m_b-|P|]\setminus ([m_A]\cup Q_2)}$ is at least $\rk(B^{\widebar{M(P)}\setminus Q_2})=:r_2$, so let \(\bm{c}_1,\dots,\bm{c}_{r_1}\) be a collection of $r_1$ linearly independent vectors from the first $m_A-|Q_1|$ columns of $(A\overset{M}{\times}B)^{\widebar{Q}}$ such that the corresponding column vectors of $A^{[m_A]\setminus Q_1}$ -- that is, the vectors obtained by only considering the first $r_A$ rows -- are also linearly independent, and \(\bm{c}_{r_1+1},\dots,\bm{c}_{r_1+r_2}\) a collection of $r_2$ of independent vectors from the $m_B-|P|-|Q_2|$ last columns.
Then since the entries in the first $r_A$ rows of $\bm{c}_{r_1+1},\dots,\bm{c}_{r_1+r_2}$ are $0$ and the column vectors obtained by only considering the first $r_A$ rows of $\bm{c}_1,\dots,\bm{c}_{r_1}$ are linearly independent, any linear representation of $\bm{0}$ \[\lambda_1 \bm{c}_1 + \dots + \lambda_{r_1+r_2}\bm{c}_{r_1+r_2}\] must satisfy $\lambda_1=\dots=\lambda_{r_1}=0$.
But since the remaining $r_2$ columns were also linearly independent, we must have $\lambda_{r_1+1}=\dots=\lambda_{r_1+r_2}=0$ as well, and hence the $r_1+r_2$ columns $\bm{c}_1,\dots,\bm{c}_{r_1+r_2}$ are linearly independent.
Since the rank of a matrix is the dimension of its column space we are done.
\end{proof}

Next we will show that for certain compounded matrices this is indeed the correct rank.

\begin{lemma}\label{lemma:compMatRankUB}
Let $m>r$ be positive integers, $A\in\Z^{r\times m}$, and $Q\subset [m]$.
Then \[\rk(A\overset{\id_Q}{\times}A) = \rk(A)+\rk(A^{\widebar{Q}}).\]
\end{lemma}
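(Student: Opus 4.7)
The plan is to prove the equality by showing both inequalities. The lower bound $\rk(A\overset{\id_Q}{\times}A) \geq \rk(A) + \rk(A^{\widebar{Q}})$ is already a direct application of Lemma~\ref{lemma:compMatRankLB} with $P = Q$, $M = \id_Q$ (so that $M(P) = Q$ and $\widebar{M(P)} = \widebar{Q}$), using the empty set for the ``$Q$'' parameter of that lemma. Thus all the work is in the matching upper bound.

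For the upper bound, I would perform elementary row operations that preserve the rank. Specifically, subtract each of the last $r$ rows from the corresponding one of the first $r$ rows. The middle block, which originally had $A^Q$ stacked on $A^Q$, becomes $\bm{0}$ stacked on $A^Q$; the right block, which originally had $\bm{0}$ on top of $A^{\widebar{Q}}$, becomes $-A^{\widebar{Q}}$ on top of $A^{\widebar{Q}}$; and the left block is unaffected. The transformed matrix therefore takes the form
\[\left(\begin{array}{c|c|c}A^{\widebar{Q}} &\bm{0} &-A^{\widebar{Q}}\\ \hline \bm{0} &A^Q &A^{\widebar{Q}}\end{array}\right).\]

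Now I would estimate the rank as the row rank. The row space of the whole matrix is the sum of the row spaces of its two horizontal halves, so its dimension is at most the sum of the two row-space dimensions. The top half, namely $(A^{\widebar{Q}}\,|\,\bm{0}\,|\,{-}A^{\widebar{Q}})$, has row space of dimension $\rk(A^{\widebar{Q}})$, because duplicating (up to sign) and inserting zero columns does not enlarge the row space. The bottom half, $(\bm{0}\,|\,A^Q\,|\,A^{\widebar{Q}})$, is a column permutation of $(\bm{0}\,|\,A)$ and hence has rank $\rk(A)$. Summing these yields
\[\rk(A\overset{\id_Q}{\times}A) \leq \rk(A^{\widebar{Q}}) + \rk(A),\]
which combined with the lower bound gives the claimed equality.

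There is essentially no serious obstacle here: the argument is a one-step row reduction followed by the trivial observation that the row space of a block matrix is contained in the sum of the row spaces of its blocks. The only point to check carefully is that the chosen row operations genuinely zero out the top copies of $A^Q$ while leaving the target block structure as described, so that the two horizontal bands decouple in the sense above; this is immediate from the definition of $A\overset{\id_Q}{\times}A$ given just before the lemma.
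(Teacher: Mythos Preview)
Your proof is correct and follows essentially the same approach as the paper: both obtain the lower bound from Lemma~\ref{lemma:compMatRankLB} and then perform elementary row (and, in the paper's case, also column) operations to reduce $A\overset{\id_Q}{\times}A$ to a block form whose two horizontal bands have ranks $\rk(A)$ and $\rk(A^{\widebar{Q}})$, after which the upper bound follows from the subadditivity of row rank. The only cosmetic differences are that the paper subtracts the top rows from the bottom and permutes columns (yielding a slightly different block layout) and phrases the final step as a pigeonhole argument on row selections rather than as a row-space sum.
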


\begin{proof}
We prove by obtaining the same upper and lower bounds for $\rk(A\overset{\id_Q}{\times}A)$. 
The lower bound follows from Lemma~\ref{lemma:compMatRankLB}.
For the upper bound, note that by performing elementary row and column operations we see that \[\rk(A\overset{\id_Q}{\times}A) = \rk\left(\begin{array}{c|r|c}A^Q &A^{\widebar{Q}} &\bm{0}\\ \hline \bm{0} &-A^{\widebar{Q}} &A^{\widebar{Q}}\tstrut\end{array}\right).\]
Let $r_1=\rk(A)$ and $r_2=\rk(A^{\widebar{Q}})$ and suppose we have $r_1+r_2+1$ row vectors from this matrix.
If at least $r_1+1$ rows come from the upper half, then there exists a nontrivial representation of $\bm{0}\in\Z^{1\times (2m-|Q|)}$, and hence the remaining coefficients can be set to $0$.
If on the other hand more than $r_2+1$ rows come from the bottom half, we can achieve a nontrivial representation of $\bm{0}$ only using these rows.
Since one of those two cases must happen, we have the required upper bound.
\end{proof}

The next result shows how positivity of $A$ can (under some conditions) be passed on to a compounded matrix.

\begin{lemma}\label{lemma:compMatPos}
Let $m>r$ be positive integers, $A\in\Z^{r\times m}$ be positive, and $Q\subset[m]$.
If there exists an integer vector $\bm{y}\in S(A^{\widebar{Q}},\bm{0})$ with all coordinates nonzero, then $A\overset{\id_Q}{\times}A$ is positive.
\end{lemma}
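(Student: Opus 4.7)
The plan is to verify both clauses of Definition~\ref{defn:positive_abundant}(i) for $C:=A\overset{\id_Q}{\times}A$. A vector $\bm{z}\in\Z^{2m-|Q|}$ lies in $S(C,\bm{0})$ exactly when, writing $\bm{z}=(\bm{u},\bm{v},\bm{w})$ with $\bm{u},\bm{w}\in\Z^{m-|Q|}$ for the two $A^{\widebar{Q}}$-blocks and $\bm{v}\in\Z^{|Q|}$ for the shared $A^Q$-block, the reassembled vectors $\bm{x},\bm{x}'\in\Z^m$ defined by $\bm{x}_{\widebar{Q}}=\bm{u}$, $\bm{x}'_{\widebar{Q}}=\bm{w}$, $\bm{x}_Q=\bm{x}'_Q=\bm{v}$ both lie in $S(A,\bm{0})$. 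So building a solution to $C\bm{z}=\bm{0}$ is equivalent to exhibiting a pair of solutions to $A\bm{x}=\bm{0}$ that agree on their $Q$-coordinates.

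For the first clause, I would simply take any $\bm{x}^*\in S(A,\bm{0})\cap\N^m$, which exists because $A$ is positive, and use the pair $(\bm{x}^*,\bm{x}^*)$: it trivially agrees on $Q$ and produces the all-positive compounded solution $(\bm{x}^*_{\widebar{Q}},\bm{x}^*_Q,\bm{x}^*_{\widebar{Q}})\in\N^{2m-|Q|}$.

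For irredundancy, I would introduce the extension $\tilde{\bm{y}}\in\Z^m$ with $\tilde{\bm{y}}_{\widebar{Q}}=\bm{y}$ and $\tilde{\bm{y}}_Q=\bm{0}$. Since $A\tilde{\bm{y}}=A^{\widebar{Q}}\bm{y}=\bm{0}$, every $\bm{x}^*+c\tilde{\bm{y}}$ with $c\in\Z$ lies in $S(A,\bm{0})$ and agrees with $\bm{x}^*$ on $Q$. Given two distinct column indices of $C$, I would split into cases. If they both belong to the same copy (both to $(\bm{u},\bm{v})$ or both to $(\bm{w},\bm{v})$), they correspond to two distinct positions in $A$, so irredundancy of $A$ gives a solution $\bm{x}\in S(A,\bm{0})$ distinguishing them, and the pair $(\bm{x},\bm{x})$ yields the required solution to $C\bm{z}=\bm{0}$. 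If instead one index lies in $\bm{u}$ and the other in $\bm{w}$, they correspond to some $a,b\in\widebar{Q}$, and pairing $\bm{x}^*$ with $\bm{x}^*+c\tilde{\bm{y}}$ gives a compounded solution with relevant coordinates $x^*_a$ and $x^*_b+cy_b$, which differ provided $cy_b\neq x^*_a-x^*_b$.

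The only delicate point is this cross case, and the hypothesis that every coordinate of $\bm{y}$ is nonzero is exactly what makes it work: since $y_b\neq 0$, the map $c\mapsto cy_b$ is injective on $\Z$, so at most one integer value of $c$ is forbidden, leaving infinitely many admissible choices.
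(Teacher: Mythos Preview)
Your proof is correct and follows essentially the same approach as the paper's: both verify the two clauses of positivity by first reusing a positive solution of $A$ diagonally, and then handling irredundancy by reducing the ``same copy'' case to irredundancy of $A$ and the cross case to the hypothesis that $\bm{y}$ has all nonzero coordinates. The only cosmetic difference is that the paper uses the explicit witness $(\bm{y},\bm{0},2\bm{y})$ for the cross case (and splits cases according to whether the two indices map to the same column of $A$), whereas you build the witness by translating $\bm{x}^*$ by a multiple of $\tilde{\bm{y}}$; both constructions exploit the same nonvanishing condition on $\bm{y}$.
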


\begin{proof}
Since $A$ was positive, there exists a solution $\bm{x}=(x_1,\dots,x_m)\in S(A,\bm{0})\cap\N^m$.
But then if we define $\bm{y}=(\bm{x}^{\widebar{Q}}, \bm{x}^Q, \bm{x}^{\widebar{Q}})$, we clearly have $\bm{y}\in S(A\overset{\id_Q}{\times}A,\bm{0})\cap\N^m$.
It remains to be proved that for any two distinct indices $i,j\in[2m-|Q|]$, there exists a solution $(x_1,\dots,x_{2m-|Q|})=\bm{x}$ in $S(A\overset{\id_Q}{\times}A,\bm{0})$ such that $x_i\neq x_j$.
Let $\rho\colon[2m-|Q|]\to[m]$ denote the surjective function that maps the column indices of $A\overset{\id_Q}{\times}A$ to the corresponding ones of $A$.
If $\rho(i)\neq\rho(j)$, that is, $i$ and $j$ refer to distinct variables of $A$, this again just follows directly from the fact that $A$ was positive itself.
So we need to check the case $\rho(i)=\rho(j)\in\widebar{Q}$.
By our assumption on $A^{\widebar{Q}}$, there exists a vector $\bm{y}\in S(A^{\widebar{Q}},\bm{0})$ with all its coordinates nonzero.
Clearly, the same will hold for $2\bm{y}$, and hence \[\bm{x}=(\bm{y}, \bm{0}, 2\bm{y})\in\Z^{2m-|Q|}\] will satisfy $(A\overset{\id_Q}{\times}A)\bm{x}=\bm{0}$ and the $i$-th and $j$-th entry of $\bm{x}$ are different.
\end{proof}

Note that if $A\in\Z^{r\times m}$ is abundant, then any $Q\subset[m]$ of size $|Q|=1$ will satisfy the requirements of Lemma~\ref{lemma:compMatPos}.
The next result shows that even in the case that $Q$ itself does not satisfy them, we can find a superset that does, which will help us to get universal lower bounds that are sufficient for our applications.

\begin{lemma}\label{lemma:compound_matrix_LB}
Let $m>r$ be positive integers, $n\in\N$, $A\in\Z^{r\times m}$ positive, and $\bm{b}\in\Z^r$ such that $S(A,\bm{b})\neq\emptyset$.
Then for all $Q\subset [m]$ there exists $Q'\supset Q$ such that \[\big|S_0(A\overset{\id_{Q'}}{\times}A, \bm{b}^2)\cap[n]^{2m-|Q'|}\big| = \Omega\big(n^{2m-\rk(A)-\rk(A^{\widebar{Q}})-|Q|}\big).\]
\end{lemma}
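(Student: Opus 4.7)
The plan is to construct $Q'$ from $Q$ by a greedy expansion that preserves a key arithmetic invariant, and then apply the positive case of Lemma~\ref{lemma:proper_sols_count} to the resulting compounded matrix. I would initialize $Q' := Q$ and iterate: while there exists an index $k \in \widebar{Q'}$ whose column of $A^{\widebar{Q'}}$ is linearly independent from the remaining columns, add $k$ to $Q'$. Each such step simultaneously increases $|Q'|$ by $1$ and decreases $\rk(A^{\widebar{Q'}})$ by exactly $1$, so the quantity $|Q'|+\rk(A^{\widebar{Q'}})$ remains equal to $|Q|+\rk(A^{\widebar{Q}})$ throughout the procedure, and the process must terminate since $|Q'|\leq m$.

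At termination, either $\widebar{Q'}=\emptyset$, or every column of $A^{\widebar{Q'}}$ is a linear combination of the others. In the nonempty case, for each $k \in \widebar{Q'}$ the linear dependence exhibits a vector $\bm{z}^{(k)} \in S(A^{\widebar{Q'}},\bm{0})$ with $z^{(k)}_k \neq 0$, and a sufficiently generic integer combination $\bm{y} := \sum_k \alpha_k \bm{z}^{(k)}$ has all coordinates nonzero (we need only avoid finitely many coordinate-vanishing hyperplanes in the $\alpha_k$). Lemma~\ref{lemma:compMatPos} then guarantees that $A\overset{\id_{Q'}}{\times}A$ is positive. When $\widebar{Q'}=\emptyset$ the compounded matrix collapses to $\left(\begin{smallmatrix}A\\ A\end{smallmatrix}\right)$, whose proper integer solutions with right-hand side $\bm{b}^2$ in $[n]^m$ are precisely $S_0(A,\bm{b})\cap[n]^m$, and positivity of $A$ combined with Lemma~\ref{lemma:proper_sols_count} handles this boundary case directly.

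With positivity of $A\overset{\id_{Q'}}{\times}A$ in hand, and using that the embedding from Section~\ref{ssec:compounde-matrices} promotes any $\bm{x}_0 \in S(A,\bm{b})$ to the element $(\bm{x}_0^{\widebar{Q'}},\bm{x}_0^{Q'},\bm{x}_0^{\widebar{Q'}})\in S(A\overset{\id_{Q'}}{\times}A,\bm{b}^2)$, I would combine the lower bound of Lemma~\ref{lemma:proper_sols_count} with the rank identity of Lemma~\ref{lemma:compMatRankUB} to obtain
\[
\big|S_0(A\overset{\id_{Q'}}{\times}A,\bm{b}^2)\cap[n]^{2m-|Q'|}\big| = \Omega\big(n^{(2m-|Q'|)-\rk(A)-\rk(A^{\widebar{Q'}})}\big),
\]
and then invoke the invariant $|Q'|+\rk(A^{\widebar{Q'}}) = |Q|+\rk(A^{\widebar{Q}})$ to rewrite the exponent as $2m-\rk(A)-\rk(A^{\widebar{Q}})-|Q|$, which is exactly the required bound.

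The main obstacle I anticipate is not in the counting step or in the rank formula, but in guaranteeing that the greedy extension produces a $Q'$ for which Lemma~\ref{lemma:compMatPos} is applicable. Specifically, the step that requires attention is the upgrade, at termination, from the pointwise existence of null vectors $\bm{z}^{(k)}$ each with a single guaranteed nonzero entry to a single null vector $\bm{y}$ with \emph{all} entries nonzero; this is the only nontrivial piece of linear algebra in the argument, and everything else is essentially bookkeeping around the invariant.
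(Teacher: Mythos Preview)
Your proposal is correct and takes essentially the same approach as the paper: both extend $Q$ by adjoining exactly those indices $j\in\widebar{Q}$ for which every kernel vector of $A^{\widebar{Q}}$ has $j$-th coordinate zero (equivalently, whose column is not in the span of the remaining ones), observe that this preserves $|Q'|+\rk(A^{\widebar{Q'}})$, and then apply Lemmas~\ref{lemma:compMatPos}, \ref{lemma:compMatRankUB}, and~\ref{lemma:proper_sols_count}. The paper defines the set $Q_1$ of such indices in one shot rather than greedily, but the resulting $Q'$ is identical; your explicit treatment of the genericity step for combining the $\bm{z}^{(k)}$ into a single all-nonzero kernel vector, and of the degenerate case $Q'=[m]$, is in fact more careful than the paper's ``by construction'' remark.
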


\begin{proof}
If $Q$ satisfies the assumptions of Lemma~\ref{lemma:compMatPos}, we see that $A\overset{\id_Q}{\times}A$ is positive and hence we can apply the lower bound of Lemma~\ref{lemma:proper_sols_count} directly with $Q'=Q$, noting that $A\overset{\id_Q}{\times}A$ has $2m-|Q|$ columns and rank $\rk(A)+\rk(A^{\widebar{Q}})$ by Lemma~\ref{lemma:compMatRankUB}.
Otherwise, denote by $\bm{c}_1,\dots,\bm{c}_m\in\Z^r$ the columns of $A$, and let $Q_1\subset \widebar{Q}$ be the index set of the columns that will always have coefficient zero in a linear combination of the zero vector.
We claim that these columns are linearly independent and their span does not contain any column $\bm{c}_i$ with $i\in \widebar{Q}\setminus Q_1$.
Indeed, linear independence holds because any non-trivial linear combination $\sum_{i\in Q_1}\lambda_i \bm{c}_i=\bm{0}$ could be extended to a linear combination $\sum_{i\in\widebar{Q}}\lambda_i \bm{c}_i=\bm{0}$ such that $\lambda_i \neq 0$ for at least one $i\in Q_1$, a contradiction to the definition of $Q_1$.
Similarly, if $\sum_{i\in Q_1}\lambda_i \bm{c}_i = \bm{c}_j$ for some $j\in \widebar{Q}\setminus Q_1$, we obviously see that $\sum_{i\in Q_1}\lambda_i \bm{c}_i - \bm{c}_j=\bm{0}$ is a linear combination with at least one $\lambda_i\neq 0$, again a contradiction.

We thus see that defining $Q'=Q\cup Q_1$, the matrix $A^{\widebar{Q}'}$ has rank $\rk(A^{\widebar{Q}'})=\rk(A^{\widebar{Q}})-|Q_1|$ and its number of columns is $m-|Q'|=m-|Q|-|Q_1|$, and hence \[m-|Q'|-\rk(A^{\widebar{Q}'}) = m-|Q|-\rk(A^{\widebar{Q}}).\]
By construction, $A^{\widebar{Q}'}$ satisfies the assumption of Lemma~\ref{lemma:compMatPos}, and hence for $n\in\N$, applying Lemma~\ref{lemma:proper_sols_count} implies \[\big|S_0(A\overset{\id_{Q'}}{\times}A,\bm{b}^2)\cap[n]^{2m-|Q'|}\big| = \Omega\big(n^{2m-\rk(A)-\rk(A^{\widebar{Q}})-|Q|}\big),\] which is what we wanted to show.
\end{proof}

Lemmas~\ref{lemma:compMatRankUB} and~\ref{lemma:compMatPos} actually apply to a more general iterated construction.
Using the assumptions made in Lemma~\ref{lemma:compMatRankLB}, write $Q=\{q_1<\dots<q_{|Q|}\}$, and define for any integer $j\geq 1$ the bijection $M_j\colon [j(m-|Q|)+|Q|]\setminus[j(m-|Q|)]\to [|Q|]$ by $M_j(i)=q_{i-j(m-|Q|)}$.
Then all the results mentioned in the aforementioned lemmas also apply in a natural way to the matrix
\begin{equation}\label{eq:milkyWayCompMat}
A\overset{\id_Q}{\times}A\overset{M_1}{\times}\cdots\overset{M_t}{\times}A = \begin{pmatrix}A^{\widebar{Q}} & & &A^Q\\ &\ddots & &\vdots \\ & &A^{\widebar{Q}} &A^Q \\ & & &A^Q &A^{\widebar{Q}}\end{pmatrix}
\end{equation}
for any $t\geq 1$.
The idea being that instead of just having a pair of proper solutions to $A\bm{x}=\bm{b}$, we now have a collection of $t+1$ of them that all mutually intersect exactly in the variables indexed by $Q$.
Specifically, we get the following result.

\begin{lemma}\label{lemma:milkyWayCompMat}
Let $m>r$ be positive integers, $A\in\Z^{r\times m}$ be positive and abundant and $Q=\{i\}\subset[m]$ of size $1$, and for any positive integer $j$ let $M_j$ denote the function $jm-j+1\mapsto i$.
Then for any positive integer $t$ it holds that the matrix $A\overset{\id_Q}{\times}A\overset{M_1}{\times}\cdots\overset{M_t}{\times}A$ is positive and of rank \[\rk\bigl(A\overset{\id_Q}{\times}A\overset{M_1}{\times}\cdots\overset{M_t}{\times}A\bigr) = \rk(A)+(t+1)\rk(A^{\widebar{Q}}).\]
\end{lemma}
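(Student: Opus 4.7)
The plan is to establish both claims together by exploiting the explicit block structure that the iterated construction produces. With $|Q|=1$, unwinding the definition of $M_j$ shows that at every step the compounding identifies the single "shared column" of the current matrix with column $i$ of the freshly appended $A$; consequently $B_{t+1}:=A\overset{\id_Q}{\times}A\overset{M_1}{\times}\cdots\overset{M_t}{\times}A$ has $(t+2)r$ rows and $(t+2)(m-1)+1$ columns of the following shape. There are $t+2$ disjoint row-blocks, each of which carries its own copy of $A^{\widebar{Q}}$ as its non-shared columns, while the single remaining column restricts to $\bm{a}_i$ in every one of the $t+2$ row-blocks.

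For the rank identity I would work with the column space of $B_{t+1}$. The $(t+2)(m-1)$ block columns split naturally into $t+2$ groups, one per row-block, and the $j$-th group spans a subspace $V_j$ supported entirely on row-block $j$ whose dimension equals $\rk(A^{\widebar{Q}})$. Because the $V_j$ live in disjoint row-blocks, their sum is direct, giving $\dim\bigoplus_{j=1}^{t+2}V_j=(t+2)\rk(A^{\widebar{Q}})$. Abundance of $A$ forces $\rk(A^{\widebar{Q}})=\rk(A)$ for $|Q|=1$, hence the column space of $A^{\widebar{Q}}$ coincides with that of $A$ and in particular contains $\bm{a}_i$; this shows that the shared column already lies inside $\bigoplus V_j$ and contributes nothing new. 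Consequently the full column space of $B_{t+1}$ equals $\bigoplus V_j$, and $\rk(B_{t+1})=(t+2)\rk(A^{\widebar{Q}})=\rk(A)+(t+1)\rk(A^{\widebar{Q}})$. The matching lower bound also drops out directly from iterating Lemma~\ref{lemma:compMatRankLB}, since at each step the image of $M_j$ equals $Q$.

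For positivity, the existence of an all-positive integer solution is immediate: pick any $\bm{x}_0\in S(A,\bm{0})\cap\N^m$ and assign $\bm{x}_0$ to every copy, which is consistent because all copies trivially agree on the shared coordinate $(x_0)_i$. Irredundancy of $B_{t+1}$ requires a case analysis on a pair of distinct column indices. If both lie in the same copy of $A$, or if one of them is the shared column, irredundancy of $A$ itself provides a witness $\bm{x}\in S(A,\bm{0})$ which I replicate across all copies. The only delicate case is when the two indices lie in two different copies of $A$ but correspond to the \emph{same} intra-copy coordinate $l\neq i$. Here abundance is used crucially: since $\rk(A^{\widebar{Q}\setminus\{l\}})=\rk(A)=\rk(A^{\widebar{Q}})$, the column $\bm{a}_l$ is a rational combination of the remaining columns of $A^{\widebar{Q}}$, which after clearing denominators produces a nonzero integer vector $\bm{z}\in\ker(A)$ with $z_i=0$ and $z_l\neq 0$. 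Assigning $\bm{z}$ to one of the two relevant copies and $\bm{0}$ to every other copy yields a bona fide integer solution of $B_{t+1}$ in which the shared coordinate equals $0$ for every copy while the two target positions take the distinct values $0$ and $z_l$.

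The chief obstacle is precisely this last sub-case, because the shared-column constraint demands manufacturing kernel elements of $A$ with one coordinate pinned and another prescribed to be nonzero; this is exactly the point at which the "remove two columns" clause in the definition of abundance is invoked, and without it one could not simultaneously control the shared and the differing coordinates.
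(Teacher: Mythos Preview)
Your argument is correct. The rank computation via the direct-sum decomposition of the column space is clean and complete, and your irredundancy case analysis works (the omitted sub-case ``different copies, different intra-copy coordinates $l_1\neq l_2$'' is covered by the same replication device you use for case~1, so there is no real gap).

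The paper's proof is shorter but also less explicit: it simply refers back to Lemmas~\ref{lemma:compMatRankUB} and~\ref{lemma:compMatPos}, observing that abundance guarantees the hypothesis of Lemma~\ref{lemma:compMatPos}, namely the existence of a vector $\bm{y}\in S(A^{\widebar{Q}},\bm{0})$ with \emph{all} coordinates nonzero. The two arguments diverge mainly in the irredundancy step. The paper uses a single such $\bm{y}$ and separates copies by scalar multiples, assigning $(\bm{y},2\bm{y},\dots,(t+2)\bm{y})$ to the $A^{\widebar{Q}}$-blocks and $0$ to the shared coordinate; this handles every cross-copy pair at once. You instead manufacture, for each target coordinate $l$, a kernel element $\bm{z}$ with $z_i=0$ and $z_l\neq 0$ via the two-column removal clause of abundance. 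Your route makes the use of abundance more transparent, while the paper's route is more uniform and avoids the per-coordinate construction. For the rank, your column-space argument exploits $\rk(A^{\widebar{Q}})=\rk(A)$ to absorb the shared column directly, whereas the paper's Lemma~\ref{lemma:compMatRankUB} obtains both bounds by elementary row operations without ever invoking abundance; both yield the same formula since $\rk(A)+(t+1)\rk(A^{\widebar{Q}})=(t+2)\rk(A^{\widebar{Q}})$ here.
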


\begin{proof}
The proof is identical to that of Lemmas~\ref{lemma:compMatRankUB} and~\ref{lemma:compMatPos}, noting that since $A$ is abundant, for any $Q\subset [m]$ of size $1$ it will hold that $S(A^{\widebar{Q}},\bm{0})$ contains a solution with all entries non-zero.
\end{proof}

\section{The distribution of type-\texorpdfstring{$\frakP$}{P} solutions: the main meta theorem}\label{sec:mainProof}

We now state our main result (which we call \emph{meta theorem}) that will imply Theorems~\ref{thm:distProper} and~\ref{thm:distHomStrictBal}.

\begin{thm}\label{thm:distribution}
Let $m>r$ be positive integers, $A\in\Z^{r\times m}$ a positive and abundant integer matrix, $\bm{b}\in\Z^r$ such that $S(A,\bm{b})\neq\emptyset$, and $\frakP\subset \frakP(A)$ satisfying $(\{1\},\dots,\{m\})\in\frakP$ and \[\{\frakp\in\frakP : S_0(A_\frakp,\bm{b})\neq\emptyset\}\subset \frakP_0(A).\]
Furthermore, let $n$ be an integer, $0\leq p:=p(n)\leq 1$ and $X_n$ the random variable $|S_\frakP(A,\bm{b})\cap[n]_p^m|$ that counts the number of type--$\frakP$ solutions $\bm{x}\in[n]_p^m$ to $A\bm{x}=\bm{b}$.
Then \[\tilde{X}_n \xrightarrow{d} \calN(0,1)\] if $n(1-p)\to\infty$ and $np^{c(A_\frakp)}\to\infty$ for all $\frakp\in\frakP$ with $S_\frakp(A,\bm{b})\neq\emptyset$.
\end{thm}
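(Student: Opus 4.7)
The plan is to prove Theorem~\ref{thm:distribution} via the method of moments, following the architecture of Ruci\'nski's proof of Theorem~\ref{thm:rucinski} but replacing the graph-theoretic combinatorics by the linear-algebraic machinery developed in Section~\ref{sec:prelimLems}. Concretely, we will show that for every positive integer $k$, the $k$-th moment of $\tilde{X}_n$ converges to the $k$-th moment of $\calN(0,1)$: to $0$ if $k$ is odd and to $(k-1)!!$ if $k$ is even. Since the standard normal distribution is determined by its moments, this suffices.

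First, decompose $X_n = \sum_{\frakp\in\frakP} Y_n^\frakp$ where $Y_n^\frakp := |S_\frakp(A,\bm{b})\cap[n]_p^m|$. Lemma~\ref{lemma:nontrivial_reduced_proper} puts each $Y_n^\frakp$ in bijection with the count of proper solutions of $A_\frakp\bm{x}=\bm{b}$ inside $[n]_p^{|\frakp|}$, and the hypothesis on $\frakP$ means that only $\frakp$ with $A_\frakp$ positive contribute at all. Lemma~\ref{lemma:proper_sols_count} then yields $\Exp(Y_n^\frakp)=\Theta(n^{|\frakp|-\rk(A_\frakp)}p^{|\frakp|})$, from which the scale of $\Exp(X_n)$ and the dominant partition type at each $n$ can be read off. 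The variance is the critical next step: expanding $\Var(X_n) = \sum \Cov(\Ind_{\bm{x}},\Ind_{\bm{y}})$ over pairs of solution indicators, only pairs with $\{\bm{x}\}\cap\{\bm{y}\}\neq\emptyset$ contribute, and each such pair corresponds to an element of $S_0(A_\frakp\overset{\id_Q}{\times}A_{\frakp'},\bm{b}^2)$ for some non-empty intersection index set $Q$. Lemmas~\ref{lemma:compMatRankUB} and~\ref{lemma:compound_matrix_LB} pin down the count of these elements, and the hypothesis $np^{c(A_\frakp)}\to\infty$ is exactly what ensures that $\Var(X_n)$ attains the correct order of magnitude to drive a central limit theorem in the low-$p$ regime.

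For the higher moments, the standard expansion of $\Exp(\tilde{X}_n^k)$ becomes a sum over $k$-tuples of proper solutions, which we classify by the hypergraph recording which variables of which solutions coincide. Each configuration is described by an iterated compounded matrix of the form displayed in~\eqref{eq:milkyWayCompMat}; its rank and positivity are analyzed through Lemma~\ref{lemma:milkyWayCompMat} together with the natural iterations of Lemmas~\ref{lemma:compMatRankUB}, \ref{lemma:compMatPos} and~\ref{lemma:compound_matrix_LB}. The heart of the proof, and its main technical obstacle, is to show that only the \emph{matching-type} configurations contribute non-negligibly to $\Exp(\tilde{X}_n^k)$: those in which the $k$ solutions split into $k/2$ pairwise-disjoint pairs of equal partition type, each pair overlapping in the minimal fashion that realizes the extremal variance order. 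Every other intersection hypergraph must be shown to cost at least one factor of $(np^{c(A_\frakp)})^{-1}$ or $(n(1-p))^{-1}$, which is a delicate combinatorial bookkeeping because every such hypergraph generates a distinct compounded matrix whose rank and positivity status have to be checked individually. Counting the surviving matchings then produces exactly $(k-1)!!$ for even $k$ and a vanishing contribution for odd $k$.

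As in~\cite{Rucinski1988}, the argument splits into three regimes of $p$. In the sparse range, where $p$ is bounded away from $1$, the rank bounds apply directly and the hypothesis $np^{c(A_\frakp)}\to\infty$ eliminates configurations with too much overlap. An intermediate range is handled by combining both bounds. In the dense regime $p\to 1$, we rewrite $X_n$ as a polynomial in the complementary Bernoulli indicators $1-\Ind_{i\in[n]_p}$ and repeat the moment analysis with $1-p$ playing the role of $p$; here Lemma~\ref{lemma:sols_with_fixed_elts}, which crucially exploits the abundance of $A$, is what makes the duality effective by showing that forcing any two specific elements to lie outside the random set removes only a negligible fraction of the solutions. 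I expect the truly laborious step to be the uniform organization of all intersection hypergraphs across these three regimes: producing for each the appropriate compounded matrix, reading off its rank and positivity from the Section~\ref{sec:prelimLems} toolkit, and verifying that its contribution to $\Exp(\tilde{X}_n^k) - (k-1)!!\cdot\Ind[k\text{ even}]$ is indeed $o(1)$.
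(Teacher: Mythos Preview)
Your plan matches the paper's own proof: method of moments, the same three-way split on the limit $a=\lim p\in\{0\}\cup(0,1)\cup\{1\}$, reduction of the $k$-th central moment to a sum over $k$-tuples in $\frakS_k$, and the demonstration via the compounded-matrix lemmas of Section~\ref{sec:prelimLems} that only matching-type configurations survive. One detail to adjust when you write out the $p\to 0$ regime: the surviving pairs are not restricted to ``equal partition type, minimal overlap''---the paper packages the contributing pairs as \emph{leading triples} $(\frakp,\frakq,M)$, which may have $\frakp\neq\frakq$ (though necessarily $|\frakp|=|\frakq|$) and non-minimal intersection pattern $M$, and you need this full generality for the final multinomial sum over leading triples to collapse to $(k-1)!!\,\mu_2^{k/2}$.
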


Let us first see that this indeed implies Theorems~\ref{thm:distProper} and~\ref{thm:distHomStrictBal}.

\begin{proof}[Proof of Theorem~\ref{thm:distProper}]
We apply Theorem~\ref{thm:distribution} with $\frakP=\{(\{1\},\dots,\{m\})\}$, noting that $S_0(A,\bm{b})\neq\emptyset$ follows from the assumptions and Lemma~\ref{lemma:proper_sols_count}.
\end{proof}

\begin{proof}[Proof of Theorem~\ref{thm:distHomStrictBal}]
We see that for any $\frakp\in\frakP(A)$, whenever $S_0(A_\frakp,\bm{0})\neq \emptyset$, the matrix $A_\frakp$ is positive by definition, that is $\frakp\in\frakP_0(A)$.
Since $A$ is strictly balanced, it holds that \[c(A) = \frac{m}{m-\rk(A)} > \max_{\emptyset \subsetneqq Q\subsetneqq [m]} \frac{|Q|}{|Q|-r_Q(A)} > c(A_\frakp)\] for any $\frakp\in\frakP(A)$ with $|\frakp|<m$ and hence $np^{c(A_\frakp)}\to\infty$.
\end{proof}

The proof of Theorem~\ref{thm:distribution} involves the analysis of several sub-cases, which we will split into different subsections.
In general, it will follow the ideas that were used by Ruci\'{n}ski in~\cite{Rucinski1988} where he proved a similar result to Theorem~\ref{thm:distribution} in order to determine the distribution of the number of occurrences of a fixed graph $G$ as a subgraph in a binomial random graph.

\subsection{The proof of Theorem~\ref{thm:distribution}}\label{sec:mainProofIf}
Before going into further case analysis, let us first discuss the common jumping off point and strategy.
Let $\mu_k=\Exp((X_n-\Exp(X_n))^k)$ denote the $k$-th central moment of $X_n$ associated to the system of linear equations $A \textbf{x}=\bm{b}$.
Our final goal will always be to show that, independently of the system studied,
\begin{equation}\label{eq:moment_goal}
\mu_k = \begin{cases}(1+o(1))\frac{k!}{(k/2)!}2^{-k/2}\mu_2^{k/2} &\text{ if $k$ is even,}\\ o(\mu_2^{k/2})&\text{ if $k$ is odd.}\end{cases}
\end{equation}
These estimates would show that the moments $\Exp(\tilde{X}_n^k)$ converge to the moments of a normal distribution, which is uniquely determined by its moments.

Given a solution $\bm{x}=(x_1,\dots,x_m)$ in $S(A,\bm{b})$, denote by $\Ind_{\bm{x}}$ the indicator random variable for the event $\{x_1,\dots,x_m\}\subset[n]_p$.
Abusing notation somewhat, if $\chi=(\bm{x}_1,\dots,\bm{x}_k)\in S(A,\bm{b})^k$ is a $k$-tuple of solutions, we will write $\{\chi\}=\bigcup_{i=1}^k\{\bm{x_i}\}$.
As a visual shorthand, bold latin letters will indicate solutions, while greek letters will denote tuples of solutions.
Note first that by definition we have
\begin{equation}\label{eq:kth_moment}
\mu_k = \sum_{\chi\in\mathfrak{S}_k} \Exp\left(\prod_{\bm{x}\in\chi}\left(\Ind_{\bm{x}}-p^{|\{\bm{x}\}|}\right)\right),
\end{equation}
where $\mathfrak{S}_k$ is the set containing all $k$-tuples $\chi={(\bm{x}_1,\dots,\bm{x}_k)\in (S_\frakP(A,\bm{b})\cap[n]^m)^k}$ such that for every $i\in [k]$ there exists a $j\in[k]\setminus\{i\}$ such that $\{\bm{x}_i\}\cap \{\bm{x}_j\}\neq \emptyset$.
Indeed, for any $k$-tuple $\chi$ that contains some $\bm{x}\in S_\frakP(A,\bm{b})\cap[n]^m$ that is disjoint from the remaining $k-1$ solutions, we have \[\Exp\left(\prod_{\bm{y}\in\chi}\left(\Ind_{\bm{y}}-p^{|\{\bm{y}\}|}\right)\right) = \Exp\left(\Ind_{\bm{x}}-p^{|\{\bm{x}\}|}\right)\Exp\Bigg(\prod_{\substack{\bm{y}\in\chi\\ \bm{y}\neq\bm{x}}}\left(\Ind_{\bm{y}}-p^{|\{\bm{y}\}|}\right)\Bigg)=0.\]
Equation~\eqref{eq:kth_moment} behaves slightly different depending on the behavior of $p$, so suppose $p\to a$ for some constant $a\in[0,1]$. We split the analysis in three cases, depending on wether this limit belongs to $(0,1)$, is equal to 1 and is equal to 0.

\subsubsection{Case 1: \texorpdfstring{$\mathbf{0<a<1}$}{0<a<1}.}\label{proof: case0<a<1}

In this case we see that~\eqref{eq:kth_moment} implies $\mu_k = \sum_{\mathfrak{S}_k} \Theta(1)$, so we need to analyze the cardinality of $\mathfrak{S}_k$.
We are going to prove~\eqref{eq:moment_goal} by induction on $k$.
The base cases $k=2$ and $k=1$ are clearly always true.
Our induction hypothesis tells us then that for any $\ell<k$ it holds that $\mu_\ell = O(\mu_2^{\ell/2})$ and hence by our previous observation
\[|\frakS_\ell|=O(|\frakS_2|^{\ell/2}).\]
Let us now study the statement for parameter $k$. The analysis will be different depending on wether $k$ is odd or even.  
Let $\frakS_k'$ denote the subset of $\frakS_k$ containing all $k$-tuples of solutions $(\bm{x}_1,\dots,\bm{x}_k)$ with $\bm{x}_i\in(S_\frakP(A,\bm{b})\cap[n]^m)$ such that for every $i$ the choice of $j\neq i$ with $\{\bm{x}_i\}\cap \{\bm{x}_j\}\neq \emptyset$ is unique, that is, the solutions can be grouped into $k/2$ pairwise disjoint pairs.
Note that clearly, $\frakS_k'=\emptyset$ for every odd $k$.
If $\widebar{\frakS}_k'=\frakS_k\setminus \frakS_k'$, we will show that
\begin{equation*}\label{eq:if_case1_nonpaired_negligible}
|\widebar{\frakS}_k'| = o(|\frakS_2|^{k/2}),
\end{equation*}
which would imply \[\sum_{\chi\in\widebar{\frakS}_k'}\Exp\left(\prod_{\bm{x}\in\chi}\left(\Ind_{\bm{x}}-p^{|\{\bm{x}\}|}\right)\right) = o(\mu_2^{k/2}).\]
To see that this is true, suppose $\chi:=(\bm{x}_1,\dots,\bm{x}_k)\in\widebar{\frakS}_k'$.
Then there must exist an index $i=i(\chi) \in [k]$ such that \[\chi^{[k]\setminus\{i\}}=(\bm{x}_1,\dots,\bm{x}_{i-1},\bm{x}_{i+1},\dots,\bm{x}_k)\] is contained in $\frakS_{k-1}$.
Taking the minimum choice of $i$ for each $\chi$, we have thus defined a map $\pi\colon \widebar{\frakS}_k'\to\frakS_{k-1}$, and hence \[|\frakS_k'| \leq \max_{\upsilon\in\frakS_{k-1}}|\pi^{-1}(\upsilon)|\cdot|\frakS_{k-1}|.\]
Since by induction $|\frakS_{k-1}|=O(|\frakS_2|^{(k-1)/2})$, it suffices to show that
 $$\max_{\upsilon\in\frakS_{k-1}}|\pi^{-1}(\upsilon)|=o(|\frakS_2|^{1/2}).$$
We will first determine a lower bound on $|\frakS_2|$.
It is clear that $|\frakS_2|\geq |S_0(A\overset{\id_Q}{\times}A,\bm{b}^2)\cap[n]^{2m-|Q|}|$ for any $Q\subset[m]$.
Since $A$ is abundant, any $Q$ containing exactly one index will satisfy the conditions of Lemma~\ref{lemma:compMatPos}, so fix an arbitrary one.
It follows by this and Lemma~\ref{lemma:proper_sols_count} applied to $A\overset{\id_Q}{\times}A$ that
\begin{equation}\label{eq:if_case1_mu2_LB}
|\frakS_2| \geq |S_0(A\overset{\id_Q}{\times}A,\bm{b}^2)\cap[n]^{2m-1}|=\Omega(n^{2m-2\rk(A)-1}).
\end{equation}
We now turn to giving an upper bound on $\max_{\upsilon\in\frakS_{k-1}}|\pi^{-1}(\upsilon)|$, so fix an $\upsilon\in\frakS_{k-1}$.
By definition of $\frakS_k$, any solution that could extend $\upsilon$ to a $k$-tuple in $\frakS_k$ must intersect $\{\upsilon\}$, and hence by Lemma~\ref{lemma:sols_with_fixed_elts} we see that \[|\pi^{-1}(\upsilon)| = O(n^{m-\rk(A)-1}) = o\left(|\frakS_2|^{1/2}\right)\] by the previously obtained lower bound~\eqref{eq:if_case1_mu2_LB}.
Since $\frakS_k'=\emptyset$ for odd $k$, this also proves the odd case of~\eqref{eq:moment_goal}.

We now turn to the case of even $k$, noting that~\eqref{eq:if_case1_nonpaired_negligible} tells us that essentially, the tuples that are summed over in $\mu_k$ are $k/2$ pairwise disjoint pairs of those summed over in $\mu_2$.
We begin by showing that for all but a negligible amount, pairs $(\bm{x},\bm{y})\in\frakS_2$ will satisfy $\bm{x},\bm{y}\in S_0(A,\bm{b})$ and $|\{\bm{x}\}\cap \{\bm{y}\}|=1$.
Let us first see that we can restrict ourselves to pairs of proper solutions.
This follows from a similar argument as was used in the proof of Lemma~\ref{lemma:sols_with_fixed_elts}.
Namely, if we fix an arbitrary $\bm{x}\in S_\frakP(A,\bm{b})$, then by~\eqref{eq:sols_with_fixed_elts_positive}
 \[\big| \{y\in S_0(A_\frakp, \bm{b})\cap [n]^{|\frakp|} : \frakp\in\frakP,\, |\frakp|<m\} = O(n^{m-\rk(A)-2}).\]
Since there are at most $n^{m-\rk(A)}$ choices for $\bm{x}$, this is negligible when compared to $|\frakS_2|$.
Applying the second part of Lemma~\ref{lemma:sols_with_fixed_elts} directly, the number of pairs of proper solutions that intersect in at least two elements is negligible as well.
Note that for any pair $(\bm{x},\bm{y})$ satisfying the structure described above, we have $\Exp((\Ind_{\bm{x}}-p^{\{\bm{x}\}})(\Ind_{\bm{y}}-p^{\{\bm{y}\}}))\sim (a^{2m-1}(1-a))$.

Finally, note that for any pair $\chi\in\frakS_2$, the number of pairs $\upsilon\in\frakS_2$ that share elements with $\chi$ is negligible:
Clearly, any such $\upsilon$ will consist of an $\bm{x}\in S_0(A,\bm{b})$ satisfying $\{\bm{x}\}\cap \{\chi\}\neq\emptyset$ and a $\bm{y}\in S_0(A,\bm{b})$ satisfying $\{\bm{y}\}\cap\{\bm{x}\}\neq\emptyset$.
Applying Lemma~\ref{lemma:sols_with_fixed_elts} again, the number of such pairs is $O(n^{2m-2\rk(A)-2})=o(|\frakS_2|)$.
We conclude
\[|\frakS_{k}| \sim \binom{|\frakS_2|/2}{k/2}k!\]
and hence, as $p$ tends to $a\in (0,1)$,
\[\mu_k \sim \binom{|\frakS_2|/2}{k/2}k!(a^{2m-1}(1-a))^{k/2} \sim \frac{k!}{(k/2)!}2^{-k/2}\mu_2^{k/2},\]
which is what we wanted to prove.
\hfill\qedsymbol 

\subsubsection{Case 2: \texorpdfstring{$\mathbf{a=1}$}{a=1}.}\label{proof: casea=1}

Recall that $p$ is chosen such that $n(1-p)$ tends to infinity. This property will be specially relevant in this case.
We will instead look at the complements, so if $\bm{x}$ is a solution to $A\bm{x}=\bm{b}$, define $q=1-p$ and $\bar{\Ind}_{\bm{x}} = 1-\Ind_{\bm{x}}$.
We see that $\nu_{\bm{x}} \vcentcolon = \Exp(\bar{\Ind}_{\bm{x}})=1-p^{|\{\bm{x}\}|}\sim |\{\bm{x}\}|q$ when $n$ tends to infinity.
Let $\chi=(\bm{x}_1,\dots,\bm{x}_k)\in\frakS_k$ be a $k$-tuple of solutions in $S_\frakP(A,\bm{b})\cap[n]^m$.
Using these definitions we see that~\eqref{eq:kth_moment} can be written as
\begin{equation}\label{eq:if_case2_moment_terms}
\mu_k=\sum_{\chi\in\frakS_k}\Exp\left(\prod_{\bm{x}\in\chi}(\Ind_{\bm{x}}-p^{|\{\bm{x}\}|})\right) = (-1)^k\sum_{\chi\in\frakS_k} \Exp\left(\prod_{\bm{x}\in\chi} (\bar{\Ind}_{\bm{x}}-\nu_{\bm{x}})\right).
\end{equation}
We associate then a set system $\calH=\calH(\chi)$ to the $k$-tuple $\chi$ as follows: 
the vertex set $V(\calH)$ is just $\{\chi\}$, while the edges are $E(\calH)=\{\{\bm{x}\} : \bm{x}\in\chi\}$.
Suppose the vertex cover number $\tau(\calH)$ of the hypergraph is $s$, that is, there exist $s$ elements $a_1,\dots,a_s\in\{\chi\}$ such that removing them destroys all solutions in $\chi$, and no collection of less than $s$ elements in $[n]$ achieves this.
Then a straightforward computation shows that
\[\Exp\left(\prod_{\bm{x}\in\chi}\bar{\Ind}_{\bm{x}}\right) = \Theta(q^s).\]
Furthermore, it is clear that for any $t\in[k-1]$, every $(k-t)$-sub-collection of $\chi$ will require removal of at least $s-t$ elements to destroy all solutions in it, that is, for any $I\subset [k]$ with $|I|=k-t$ it holds that \[\Exp\left(\prod_{i\in I}\bar{\Ind}_{\bm{x}_i}\prod_{j\notin I}\nu_{\bm{x}_j}\right) = \begin{cases}\Theta(q^{s}), &\text{if $\chi^{I}\in\frakS_{k-t}$,}\\ 0, &\text{otherwise.} \end{cases}\]
Note here, we interpret $\chi$ as a $k$-vector in which every coordinate is itself an $m$-vector, so $\chi^I$ denotes the vector obtained by only keeping the solutions indexed by $I$.
Putting this together, we see that~\eqref{eq:if_case2_moment_terms} becomes
\begin{equation}\label{eq:if_case2_KthMomentAsymp}
\mu_k = \sum_{s=1}^{k} \sum_{\chi\in\frakS_{k,s}} \Theta(q^s),
\end{equation}
where $\frakS_{k,s}$ is the subset of $k$-tuples $\chi\in\frakS_k$ such that $\tau(\calH(\chi))=s$.
We will now show that for any fixed $s$, only $\chi$ of a certain structure contribute significantly to~\eqref{eq:if_case2_KthMomentAsymp}.
For this, define by $\frakS_{k,s}'$ the set of \emph{$s$-milky ways}, which are $\chi\in\frakS_{k,s}$ such that $\calH(\chi)$ is the union of $s$ disjoint components, with all edges in a component intersecting in a unique vertex.
Note that this set is only non-empty for $s\leq\floor{k/2}$.
Furthermore, note that each component corresponds to a matrix as described in~\eqref{eq:milkyWayCompMat}, call it $B_i$, hence by considering the compounded matrix $B_1 \overset{\bm{.}}{\times} \cdots \overset{\bm{.}}{\times}B_s$ we see that Lemma~\ref{lemma:milkyWayCompMat} together with Lemma~\ref{lemma:proper_sols_count} implies that for any $k$ and $s\leq \floor{k/2}$
\begin{equation}\label{eq:if_case2_milkywayLB}
|\frakS_{k,s}'| = \Omega(n^{k(m-\rk(A)-1)+s}).
\end{equation}
The combination of these two lemmas also gives a matching upper bound:
For any fixed way of dividing up the $k$ solutions into $s$ components, there are at most $O(n^{k(m-\rk(A)-1)+s})$ corresponding $k$-tuples.
Since $k$ and $s$ are fixed, the number of these partitions is $O(1)$, and hence
\begin{equation}\label{eq:if_case2_milkywayUB}
|\frakS_{k,s}'| = O(n^{k(m-\rk(A)-1)+s}).
\end{equation}
We will now show by induction on $k$ that for any $k$ and $s$, \[|\widebar{\frakS}_{k,s}'|\coloneqq|\frakS_{k,s}\setminus \frakS_{k,s}'|=O(n^{k(m-\rk(A)-1)+s-1}).\]
This holds trivially for $k=1$, and for $k=2$, it follows immediately from Lemma~\ref{lemma:sols_with_fixed_elts}, since $A$ is abundant and $s=1$ being the only vertex cover number leading to a non-empty set.
So suppose $k\geq 3$ and note that the bound~\eqref{eq:if_case2_milkywayUB} together with the induction hypothesis in particular implies $|\frakS_{\ell,s}|=O(n^{\ell(m-\rk(A)-1)+s})$ for any $\ell<k$.

We will split up $\widebar{\frakS}'_{k,s}$ even further, into disjoint sets $\widebar{\frakS}_{k,s}''$, $\widebar{\frakS}_{k,s}'''$, and $\widebar{\frakS}_{k,s}''''$, which are defined as follows:

\begin{itemize}
\item[a)] The set $\widebar{\frakS}_{k,s}''$ will contain all $\chi=(\bm{x}_1,\dots,\bm{x}_k)\in\widebar{\frakS}_{k,s}'$ such that there are solutions $\bm{x}_i, \bm{x}_j\in\chi$ satisfying $\{\chi^{[k]\setminus\{i,j\}}\}\cap\{\chi^{\{i,j\}}\}=\emptyset$, and $|\{\bm{x}_i\}\cap\{\bm{x}_j\}|\geq 2$.
\item[b)] The set $\widebar{\frakS}'''_{k,s}$ contains all $\chi=(\bm{x}_1,\dots,\bm{x}_k)\in\widebar{\frakS}_{k,s}'\setminus\widebar{\frakS}''_{k,s}$, such that there exists a solution $\bm{x}_i\in\chi$ satisfying $|\{\bm{x}\}\cap\{\chi^{[k]\setminus\{i\}}\}|\geq 2$ and $\chi^{[k]\setminus\{i\}}\in \frakS_{k-1,s}\cup\frakS_{k-1,s-1}$.
\item[c)] Finally, $\widebar{\frakS}''''_{k,s}=\widebar{\frakS}'_{k,s}\setminus (\widebar{\frakS}''_{k,s}\cup\widebar{\frakS}'''_{k,s})$ are the remaining non-milky ways.
\end{itemize}

We proceed to analyze the cardinality of each set.

\begin{itemize}

\item[a)] We start with case $\widebar{\frakS}_{k,s}''$.
For every $\chi\in\widebar{\frakS}''_{k,s}$, there are indices $i$ and $j$ such that $\chi^{[k]\setminus\{i,j\}}\in \frakS_{k-2,s-1}$, and taking the lexicographic smallest pair we have defined a map $\pi\colon \widebar{\frakS}''_{k,s}\to\frakS_{k-2,s-1}$, which implies \[|\widebar{\frakS}''_{k,s}| \leq |\frakS_{k-2,s-1}|\max_{\upsilon\in\frakS_{k-2,s-1}}|\pi^{-1}(\upsilon)|=O\left(n^{(k-2)(m-\rk(A)-1)+s-1}\max_{\upsilon\in\frakS_{k-1,s}}|\pi^{-1}(\upsilon)|\right).\]
Lemmas~\ref{lemma:proper_sols_count} and~\ref{lemma:sols_with_fixed_elts} imply that for any $\upsilon\in\frakS_{k-2,s-1}$ we have $|\pi^{-1}(\upsilon)|=O(n^{2m-2\rk(A)-2})$, and hence
\begin{equation}\label{eq:if_case2_primeprimeUB}
|\widebar{\frakS}''_{k,s}| = O(n^{k(m-\rk(A)-1)+s-1}).
\end{equation}

\item[b)] We continue with the analysis of $\widebar{\frakS}'''_{k,s}$.
In this case, taking the smallest possible index $i$, this again defines a map $\pi\colon \widebar{\frakS}'''_{k,s}\to \frakS_{k-1,s}\cup\frakS_{k-1,s-1}$ and we see by induction hypothesis that
\begin{align*}
|\widebar{\frakS}'''_{k,s}|&\leq |\frakS_{k-1,s}\cup\frakS_{k-1,s-1}|\max_{\upsilon\in\frakS_{k-1,s}\cup\frakS_{k-1,s-1}}|\pi^{-1}(\upsilon)|\\
&= O\left(n^{(k-1)(m-\rk(A)-1)+s}\max_{\upsilon\in\frakS_{k-1,s}\cup\frakS_{k-1,s-1}}|\pi^{-1}(\upsilon)|\right).
\end{align*}
Again, for any $\upsilon\in \frakS_{k-1,s}\cup\frakS_{k-1,s-1}$, we see that $|\pi^{-1}(\upsilon)|$ is at most the number of solutions $\bm{y}\in S_\frakP(A,\bm{b})\cap[n]^m$ that contain at least $2$ elements from $\{\upsilon\}$, which is $O(n^{m-\rk(A)-2})$ by Lemma~\ref{lemma:sols_with_fixed_elts}, and hence
\begin{equation}\label{eq:if_case2_primeprimeprimeUB}
|\widebar{\frakS}'''_{k,s}| = O(n^{k(m-\rk(A)-1)+s-1}).
\end{equation}

\item[c)] Finally, it remains to look at $\widebar{\frakS}''''_{k,s}$.
If $\chi=(\bm{x}_1,\dots,\bm{x}_k)$ in $\widebar{\frakS}''''_{k,s}$, we claim that there must exist an index $i$ such that $\chi^{[k]\setminus\{i\}}\in\frakS_{k-1,s-1}$.
In the sequel, we will only consider the components of $\calH(\chi)$ that do not intersect in a unique vertex, of which there is at least one since $\chi\notin\frakS'_{k,s}$.
First note that it is clear that since $\chi\notin\widebar{\frakS}'_{k,s}\cup\widebar{\frakS}''_{k,s}$, there must exist an index $i$ such that $\chi^{[k]\setminus\{i\}}\in\frakS_{k-1,s-1}\cup\frakS_{k-1,s}$ or in other words, the remaining solutions are still intersecting.
Since $\chi\notin\widebar{\frakS}'''_{k,s}$, for all of these indices, it must hold that $|\{\bm{x}_i\}\cap\{\chi^{[k]\setminus\{i\}}\}|=1$.
Finally, for at least one index $i$ of the previously considered, it must actually hold that there is a unique $j_i$ such that
\begin{equation}\label{eq:if_case2_nonMW3}
\{\bm{x}_i\}\cap\{\chi^{[k]\setminus\{i\}}\} = \{\bm{x}_i\}\cap\{\bm{x}_{j_i}\}.
\end{equation}
Indeed, since the components we consider are not sunflowers, there must exist an $\bm{x}_\ell$ that intersects the rest of its component in at least two points, and so the negation of~\eqref{eq:if_case2_nonMW3} would imply $\chi\in\widebar{\frakS}'''_{k,s}$, since $\bm{x}_\ell$ would be a valid choice.
We see that for any valid $i$ satisfying~\eqref{eq:if_case2_nonMW3} it holds that $\chi^{[k]\setminus\{i\}}\in\frakS_{k-1,s-1}$.
Having established this, we repeat the arguments already used, taking the minimal valid $i$ and defining the appropriate function $\pi\colon \widebar{\frakS}''''_{k,s}\to\frakS_{k-1,s-1}$ and conclude
\begin{align*}
|\widebar{\frakS}''''_{k,s}| &\leq |\frakS_{k-1,s-1}|\max_{\upsilon\in\frakS_{k-1,s-1}}|\pi^{-1}(\upsilon)|\\
&=O(n^{(k-1)(m-\rk(A)-1)+s-1})\max_{\upsilon\in\frakS_{k-1,s-1}}|\pi^{-1}(\upsilon)|\\
&= O(n^{k(m-\rk(A)-1)+s-1}).
\end{align*}
\end{itemize}


Together with~\eqref{eq:if_case2_milkywayLB}, we see that~\eqref{eq:if_case2_primeprimeUB},~\eqref{eq:if_case2_primeprimeprimeUB} and~\eqref{eq:if_case2_nonMW3} imply that only $s$-milky ways contribute meaningfully when $s\leq \floor{k/2}$.
Sadly, this bound is not quite strong enough when $s> \floor{k/2}$, since $s$-milky ways do not exist here.
For this, we will prove by induction on $k$ that for any $k$ and $s \geq \floor{k/2}$ it holds that
\begin{equation}\label{eq:if_case2_bigcover}
|\frakS_{k,s}| = O(n^{k(m-\rk(A)-1)+\floor{k/2}}).
\end{equation}
Again, the cases $k=1$ and $k=2$ follow from previous observations.
Moreover, the statement is trivially true for any $k$ and $s=\floor{k/2}$, so suppose $k\geq 3$ and $s > \floor{k/2}$.
If $\chi=(\bm{x}_1,\dots,\bm{x_k})\in\frakS_{k,s}$, then there must exist a least index $i$ such that $\chi^{[k]\setminus\{i\}}\in\frakS_{k-1,s}\cup\frakS_{k-1,s-1}$, which allows us again to define a projection map $\pi\colon\frakS_{k,s}\to  \frakS_{k-1,s}\cup\frakS_{k-1,s-1}$.
Since $s\geq \floor{k/2}+1$ we have $s-1\geq \floor{(k-1)/2}$, and hence by induction hypothesis we have
\begin{align*}
|\frakS_{k,s}| &\leq |\frakS_{k-1,s}\cup\frakS_{k-1,s-1}|\max_{\upsilon\in\frakS_{k-1,s}\cup\frakS_{k-1,s-1}}|\pi^{-1}(\upsilon)|\\
&= O(n^{(k-1)(m-\rk(A)-1)+\floor{(k-1)/2}})|\max_{\upsilon\in\frakS_{k-1,s}\cup\frakS_{k-1,s-1}}|\pi^{-1}(\upsilon)|\\
&= O(n^{k(m-\rk(A)-1)+\floor{(k-1)/2}}),
\end{align*}
which in particular implies~\eqref{eq:if_case2_bigcover}.
Since $q\to 0$, we thus have \[|\frakS_{k,s}|q^s=o\bigl(|\frakS_{k,\floor{k/2}}|q^{\floor{k/2}}\bigr),\] and so the terms with $s>\floor{k/2}$ in~\eqref{eq:if_case2_KthMomentAsymp} can be discarded.
Now, if $\chi\in\frakS'_{k,s}$ is a milky way, we see that \[\Exp\left(\prod_{\bm{x}\in\chi}\bar{\Ind}_{\bm{x}}\right) \sim q^s,\] while for any $I\in[k]$ with $|I|\leq k-1$ we have \[\Exp\left(\prod_{i\in I}\bar{\Ind}_{\bm{x}_i}\prod_{j\notin I}\nu_{\bm{x}_j}\right) = O(q^{s+1}) = o(q^s),\] since the removal of any single solution does not impact the cover number, while any subsequent removal can reduce it by at most one.
Furthermore, by our previous observations we have already seen that for any $k$ and $s\leq\floor{k/2}$ \[|\frakS'_{k,s}|q^s=\Theta(n^{k(m-\rk(A)-1)}(nq)^s),\] so since $nq\to\infty$ by assumption only the $s=\floor{k/2}$ term contributes significantly.
Hence~\eqref{eq:if_case2_KthMomentAsymp} can be rewritten as
\begin{equation}\label{eq:if_case2_momentsprecise}
\mu_k \sim \sum_{s=1}^{\floor{k/2}}|\frakS'_{k,s}|q^s \sim |\frakS'_{k,\floor{k/2}}|q^{\floor{k/2}}.
\end{equation}
This proves~\eqref{eq:moment_goal} for odd $k$, since \[\mu_k = O(n^{k(m-\rk(A)-1)}(nq)^{(k-1)/2})=o(n^{k(m-\rk(A)-1)}(nq)^{k/2}),\] while
\[\mu_2^{k/2}=\Omega(n^{k(m-\rk(A)-1)}(nq)^{k/2}).\]
For the even case, it is easy to see that one can repeat the argument from Case~1 to show that only those milky ways with all solutions being proper contribute meaningfully and that among those, the number of pairs in $\frakS'_{2,1}$ that intersect another pair is negligible, so just like in that case we conclude \[\mu_k \sim \binom{|\frakS'_{2,1}|/2}{k/2}k!q^{k/2} \sim \frac{k!}{(k/2)!}2^{-k/2}\mu_2^{k/2}.\]
\hfill\qedsymbol

\subsubsection{Case 3: \texorpdfstring{$\mathbf{a=0}$}{a=0}.}\label{proof: casea=0}

Finally, in this case the crucial property of $p$ that will be used is the fact that $np^{c(A_\frakp)}$ tends to infinity for every considered partition type $\frakp\in\frakP$ with non-empty solution set.

Let $\chi=(\bm{x}_1,\dots,\bm{x}_k)\in\frakS_k$.
Then for any $I\subsetneqq [k]$ we see that \[\Exp\Bigl(\prod_{i\in I}\Ind_{\bm{x}_i}\prod_{j\in[k]\setminus I}p^{|\{\bm{x}_j\}|}\Bigr) = p^{\left|\bigcup_{i\in I}\{\bm{x}_i\}\right|+\sum_{j\in[k]\setminus I}|\{\bm{x}_j\}|} = o\left(p^{\left|\bigcup_{i=1}^k \{\bm{x}_i\}\right|}\right),\] since every $\bm{x}_j$ with $j\notin I$ intersects at least one other solution.
Since we clearly have $\Exp(\Ind_{\bm{x}_1}\cdots\Ind_{\bm{x}_k})=p^{\left|\bigcup_{i=1}^k \{\bm{x}_i\}\right|}$, it thus follows that
\begin{equation}\label{eq:if_case3_moment}
\Exp\left((\mathbb{I}_{\bm{x}_1}-p^{|\{\bm{x}_1\}|})\cdots(\mathbb{I}_{\bm{x}_k}-p^{|\{\bm{x}_k\}|})\right) \sim p^{\left|\bigcup_{i=1}^k \{\bm{x}_i\}\right|}.
\end{equation}
Furthermore, since $np^{c(A_\frakp)}\to\infty$ for any $\frakp\in\frakP$, wee see that for any nonempty $Q\subset[|\frakp|]$, it holds that
\begin{equation*}\label{eq:if_case3_everysolutioninfinite}
n^{|Q|-r_Q(A_\frakp)}p^{|Q|}=\omega(1).
\end{equation*}
Let us prove~\eqref{eq:moment_goal} via induction on $k$, the cases $k=1$ and $k=2$ clearly being true.
Note that in particular, the induction hypothesis implies that for any $\ell<k$ it holds that $\mu_\ell=O(\mu_2^{\ell/2})$.
Let us decompose $\mu_k$ as
\[\mu_k \sim \sum_{\chi\in\frakS_k}\Exp\bigg(\prod_{\bm{x}\in\chi}\Ind_{\bm{x}}\bigg) = \sum_{\chi\in\frakS'_k}\Exp\bigg(\prod_{\bm{x}\in\chi}\Ind_{\bm{x}}\bigg) + B_k.\]
Recall that $\frakS'_k$ denoted the subset of $\frakS_k$ such that every $\bm{x}\in\chi$ had a unique partner $\bm{y}\in\chi$ and was disjoint from all other components.
Our first step will be to show that \[B_k = o(\mu_2^{k/2}),\] which would in particular imply~\eqref{eq:moment_goal} in the case of odd $k$, since here $\frakS'_k=\emptyset$.
To see this, note that $\chi=(\bm{x}_1,\dots,\bm{x}_k)\in\frakS_k\setminus\frakS'_k$ implies that there must exist an index $i\in[k]$ such that $\chi^{[k]\setminus\{i\}}\in\frakS_{k-1}$.
Let $i_\chi$ denote the least index $i\in[k]$ for which this is true, then we can define the map $\pi\colon \frakS_k\setminus \frakS'_k\to \frakS_{k-1}$ by $\pi(\chi)=\chi^{[k]\setminus\{i_\chi\}}$.
Furthermore, let $Q_\chi\subset[|\frakp(\bm{x}_{i_\chi})|]$ denote the index set of all the components of $\bm{x}_\chi$ that are contained in $\{\pi(\chi)\}$.
Then
\begin{equation}\label{eq:if_case3_oddcase}
\begin{split}
B_k &= \sum_{\chi\in\frakS_k\setminus\frakS'_k} \Exp\bigg(\prod_{\bm{x}\in\chi}\Ind_{\bm{x}}\bigg)\\
&= \sum_{\chi\in\frakS_k\setminus\frakS'_k} \Exp\bigg(\prod_{\bm{x}\in\pi(\chi)}\Ind_{\bm{x}}\bigg)p^{|\frakp(\bm{x}_{i_\chi})|-|Q_\chi|}\\
&\leq \sum_{\upsilon\in\frakS_{k-1}} \Exp\bigg(\prod_{\bm{y}\in\upsilon}\Ind_{\bm{y}}\bigg)\sum_{\frakp\in\frakP}\sum_{Q\subset[|\frakp|]}\sum_{\bm{z}\in\{\upsilon\}^{|Q|}}p^{|\frakp|-|Q|}\big|S_0(A_\frakp^{\widebar{Q}}, b-A_\frakp^Q\bm{z})\cap [n]^{|\frakp|-|Q|}\big|\\
&\leq \mu_{k-1} \sum_{\frakp\in\frakP}\sum_{Q\subset[|\frakp|]}n^{|\frakp|-|Q|-\rk(A_\frakp^{\widebar{Q}})}p^{|\frakp|-|Q|}\\
&= \mu_{k-1} \sum_{\frakp\in\frakP}\sum_{Q\subset[|\frakp|]}n^{|\frakp|-\rk(A)-(|Q|-r_Q(A_\frakp))}p^{|\frakp|-|Q|}
\end{split}
\end{equation}
Since $\mu_{k-1}=O(\mu_2^{(k-1)/2})$ it thus suffices to show that the remaining expression in~\eqref{eq:if_case3_oddcase} is $o({\mu_2}^{1/2})$.
But by Lemma~\ref{lemma:compound_matrix_LB} there exists a $Q'\supset Q$ such that, using $p\to 0$ we see that
\begin{align*}
\mu_2 &\geq p^{2|\frakp|-|Q'|}\big|S_0(A_\frakp\overset{\id_{Q'}}{\times}A_\frakp, (\bm{b},\bm{b}))\cap[n]^{2|\frakp|-|Q'|}\big|\\
&\geq p^{2|\frakp|-|Q|}\big|S_0(A_\frakp\overset{\id_{Q'}}{\times}A_\frakp, (\bm{b},\bm{b}))\cap[n]^{2|\frakp|-|Q'|}\big|\\
&= \Omega(n^{2|\frakp|-2\rk(A)-(|Q|-r_Q(A_\frakp))}p^{2|\frakp|-|Q|}),
\end{align*}
and hence \[{\mu_2}^{1/2} = \Omega(n^{|\frakp|-\rk(A)-(|Q|-r_Q(A_\frakp))/2}p^{|\frakp|-|Q|/2}) = \omega(n^{|\frakp|-\rk(A)-(|Q|-r_Q(A_\frakp))}p^{|\frakp|-|Q|})\] which follows from~\eqref{eq:if_case3_everysolutioninfinite}.
As stated before, this finishes the proof in the case of odd $k$, so suppose $k$ is even.
If $\frakp,\frakq\in\frakP$ and $M\colon P\to Q$ is a bijection between nonempty $P\subset[|\frakp|]$ and $Q\subset[|\frakq|]$, we will call the triple $(\frakp,\frakq, M)$ \emph{leading} if \[p^{|\frakp|+|\frakq|-|P|}\big|S_0(A_\frakp \overset{M}{\times}A_\frakq, (\bm{b},\bm{b}))\cap[n]^{|\frakp|+|\frakq|-|P|}\big| = \Omega(\mu_2),\] and hence \[\mu_2 \sim \sum_{\substack{(\frakp,\frakq,M)\\ \text{leading}}}p^{|\frakp|+|\frakq|-|\dom(M)|}\big|S_0(A_\frakp \overset{M}{\times}A_\frakq, (\bm{b},\bm{b}))\cap[n]^{|\frakp|+|\frakq|-|\dom(M)|}\big|.\]
Let us first make an observation that will be helpful later.
Suppose $|\frakp|\geq |\frakq|$, and let $M\colon P\to Q$ be a bijection between some index sets $P\subset [|\frakp|]$ and $Q\subset [|\frakq|]$.
Then by Lemma~\ref{lemma:compound_matrix_LB} there exists a $P'\supset P$ such that using $np\to\infty$ and $p\to 0$ we see
\begin{equation}\label{eq:if_case3_leadingnonmixed}
\begin{split}
\mu_2 &= \Omega\left(p^{2|\frakp|-|P'|}\big|S_0\big(A_\frakp \overset{\id_{P'}}{\times} A_\frakp, (\bm{b},\bm{b})\big)\cap[n]^{2|\frakp|-|P'|}\big|\right)\\
&= \Omega\left(p^{2|\frakp|-|P|}\big|S_0\big(A_\frakp \overset{\id_{P'}}{\times} A_\frakp, (\bm{b},\bm{b})\big)\cap[n]^{2|\frakp|-|P'|}\big|\right)\\
&= \Omega\left(n^{2|\frakp|-2\rk(A)-(|P|-r_P(A_\frakp))}p^{2|\frakp|-|P|}\right)\\
&= \Omega\left(n^{|\frakp|+|\frakq|-2\rk(A)-(|P|-r_P(A_\frakp))}p^{|\frakp|+|\frakq|-|P|}\right)\\
&= \Omega\left(p^{|\frakp|+|\frakq|-|P|}\big|S_0\big(A_\frakp \overset{M}{\times} A_\frakq, (\bm{b},\bm{b})\big)\cap[n]^{2|\frakp|-|P|}\big|\right).
\end{split}
\end{equation}
Here the last line followed from the fact that $\rk(A_\frakp \overset{M}{\times} A_\frakq)\geq \rk(A_\frakq)+\rk(A_\frakp^{\widebar{P}})=2\rk(A)-r_P(A_\frakp)$.
Hence, $(\frakp,\frakq,M)$ can be a leading overlap only if
\begin{enumerate}[label=\roman*)]
\item $|\frakp|=|\frakq|$,
\item $r_P(A_\frakp)=r_Q(A_\frakq)$,
\item $\rk(A_\frakp \overset{M}{\times} A_\frakq) = 2\rk(A)-r_P(A_\frakp)$,
\item For any matrix $C\in\left\{A_\frakp \overset{\id_{P}}{\times} A_\frakp,A_\frakq \overset{\id_{Q}}{\times} A_\frakq,A_\frakp \overset{M}{\times} A_\frakq\right\}$ it holds that \[|S_0(C\cap[n]^{2|\frakp|-|P|})|=\Theta(n^{2|\frakp|-(|P|-r_P(A_\frakp))}),\] and
\item $(\frakp,\frakp,\id_P)$ and $(\frakq,\frakq,\id_Q)$ are leading overlaps,
\end{enumerate}
since otherwise some of the $\Omega$ terms in Equation~\eqref{eq:if_case3_leadingnonmixed} would turn into '$\omega$' terms.
Note that these things in particular imply that there cannot exist any $P'\supsetneqq P$ satisfying \[|S_0(A_\frakp\overset{\id_{P'}}{\times}A_\frakp,\bm{b}^2)\cap[n]^{2|\frakp|-|P'|})|=\Omega(n^{2|\frakp|-(|P|-r_P(A_\frakp))}),\] since $p\to 0$ would then imply that $(\frakp,\frakp,\id_P)$ is not a leading triple.
By the proof of Lemma~\ref{lemma:compound_matrix_LB} this means that there does not exist any index $i\in[|\frakp|]$ such that $x_i=0$ for every solution $\bm{x}\in S(A_\frakp,\bm{0})$.
Before continuing, let us quickly try to understand the preceding statements.
Essentially, one would naively hope that all leading triples are of the form $(\frakp,\frakp,\id_P)$ for some $P\subset[|\frakp|]$, but this is too optimistic because it ignores the inherent symmetry of some systems of linear equations.
For instance, it is clear that for $A=\begin{pmatrix}1 &1 &1 &-1\end{pmatrix}$, the partitions $\frakp = (\{1,2\},\{3\},\{4\})$ and $\frakq = (\{1,3\},\{2\},\{4\})$ are technically different, but essentially the same in the sense that there exists a bijection between $S_0(A_\frakp,\bm{b})$ and $S_0(A_\frakq,\bm{b})$.

We can now continue with the actual proof by defining $\frakS''_k\subset\frakS'_k$ to be the set of $k$-tuples $\chi=(\bm{x}_1,\dots,\bm{x}_k)$ such that whenever $1\leq i<j\leq k$ and $|\{\bm{x}_i\}\cap\{\bm{x}_j\}|=s>0$, it holds that $(\frakp(\bm{x}_i),\frakp(\bm{x}_j), M_{i,j})$ is a leading triple.
Here $M_{i,j}$ is the bijection defining the incidences between the distinct components of $\bm{x}_i$ and $\bm{x}_j$.
We will show that
\[\sum_{\chi\in\frakS'_k\setminus\frakS''_k}\Exp\bigg(\prod_{\bm{x}\in\chi}\Ind_{\bm{x}}\bigg) = o(\mu_2^{k/2}).\]
To see this, we will again define a map $\pi\colon \frakS'_k\setminus\frakS''_k\to \frakS'_{k-2}$ in the following way.
Among all $1\leq i<j\leq k$ such that $(\frakp(\bm{x}_i),\frakp(\bm{x}_j), M_{i,j})$ is not a leading triple, let $\{i_\chi,j_\chi\}$ be the set minimizing $\min\{i,j\}$.
Then we can define $\pi(\chi)=\chi^{[k]\setminus\{i_\chi,j_\chi\}}$ and see that
\begin{align*}
\sum_{\chi\in\frakS'_k\setminus\frakS''_k}\Exp\bigg(\prod_{\bm{x}\in\chi}\Ind_{\bm{x}}\bigg) &= \sum_{\chi\in\frakS'_k\setminus\frakS''_k}\Exp\bigg(\prod_{\bm{x}\in\pi(\chi)}\Ind_{\bm{x}}\bigg)p^{|\frakp(\bm{x}_{i_\chi})|+|\frakp(\bm{x}_{j_\chi})|-|\dom(M_{i_\chi,j_\chi})|}\\
&\leq \mu_{k-2}\sum_{\substack{(\frakp,\frakq,M)\\ \text{not leading}}} p^{|\frakp|+|\frakq|-|\dom(M)|}\big|S_0(A_\frakp \overset{M}{\times}A_\frakq, (\bm{b},\bm{b}))\cap[n]^{|\frakp|+|\frakq|-|\dom(M)|}\big|\\
&= o(\mu_2^{k/2}),
\end{align*}
the last line following from the induction hypothesis $\mu_{k-2}=O(\mu_2^{(k-2)/2})$ and the definition of leading triples, noting that there are only $O(1)$ choices for $\frakp$, $\frakq$ and $M$.
Let us enumerate the set of leading triples by $(\frakp_1,\frakq_1,M_1),\dots,(\frakp_u,\frakq_u,M_u)$, and for any choice of $0\leq i_1,\dots,i_u\leq k/2$ such that $\sum i_j = k/2$, define the matrix $A(i_1,\dots,i_u)$ by
\begin{align*}
A(i_1,\dots,i_u) &= \underbrace{(A_{\frakp_1}\overset{M_1}{\times}A_{\frakq_1})\overset{\bm{.}}{\times}\cdots\overset{\bm{.}}{\times}(A_{\frakp_1}\overset{M_1}{\times}A_{\frakq_1})}_{\text{$i_1$ times}} \overset{\bm{.}}{\times} \cdots \overset{\bm{.}}{\times}\underbrace{(A_{\frakp_u}\overset{M_u}{\times}A_{\frakq_u})\overset{\bm{.}}{\times}\cdots\overset{\bm{.}}{\times}(A_{\frakp_u}\overset{M_u}{\times}A_{\frakq_u})}_{\text{$i_u$ times}},
\end{align*}
and write $\ell_j$ for the expression $|\frakp_j|+|\frakq_j|-|\dom(M_j)|$.
We also need the following result.

\begin{claim}\label{claim:almostAllDisjoint}
For any integers $0\leq i_1,\dots,i_u\leq k/2$ satisfying $\sum i_j = k/2$ it holds that \[|S_0(A(i_1,\dots,i_u),\bm{b}^k)\cap [n]^{i_1\ell_1 + \dots + i_u\ell_u}| \sim \prod_{j\in[u]} |S_0(A_{\frakp_j}\overset{M_j}{\times}A_{\frakq_j}, \bm{b}^2)\cap[n]^{\ell_j}|^{i_j}.\]
\end{claim}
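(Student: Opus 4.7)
My plan is to establish the claim by proving matching upper and lower bounds of order $R := \prod_{j\in[u]} |S_0(A_{\frakp_j}\overset{M_j}{\times}A_{\frakq_j}, \bm{b}^2)\cap[n]^{\ell_j}|^{i_j}$. The upper bound is immediate from the block structure of the compounded matrix: because every instance of $\overset{\bm{.}}{\times}$ combines its two arguments disjointly, $A(i_1,\dots,i_u)$ is block-diagonal, with $\sum_j i_j$ independent blocks of the form $A_{\frakp_j}\overset{M_j}{\times}A_{\frakq_j}$. Any proper solution $\bm{z}\in S_0(A(i_1,\dots,i_u),\bm{b}^k)$ therefore restricts coordinate-wise to a solution on each block, and the condition that all coordinates of $\bm{z}$ are pairwise distinct forces each restriction to be proper as well. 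Hence $|S_0(A(i_1,\dots,i_u),\bm{b}^k)\cap[n]^{\sum_j i_j \ell_j}|\leq R$.

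For the lower bound, I would reinterpret $R$ as counting all $(k/2)$-tuples $(\bm{z}_1,\dots,\bm{z}_{k/2})$ of per-block proper solutions \emph{without} imposing cross-block disjointness of supports. The gap between $R$ and $|S_0(A(i_1,\dots,i_u),\bm{b}^k)\cap[n]^{\sum_j i_j \ell_j}|$ consists exactly of such tuples in which at least one pair of blocks $(\bm{z}_{s_1},\bm{z}_{s_2})$ shares a common value. Summing over the $O(1)$-many choices of pair $(s_1,s_2)$ and factoring out the remaining $k/2-2$ blocks, which can be chosen independently, it suffices to show that the number of intersecting pairs $(\bm{z}_{s_1},\bm{z}_{s_2})$ is $o$ of the product of the two block counts. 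Fixing $\bm{z}_{s_1}$ and letting $Z=\{\bm{z}_{s_1}\}$ (a set of constant size), I would write $\bm{z}_{s_2}=(\bm{x},\bm{y})$ as a pair of proper solutions to $A_{\frakp_{s_2}}$ and $A_{\frakq_{s_2}}$ intersecting per $M_{s_2}$, and split by which component hits $Z$. In the ``$\bm{x}$ hits $Z$'' case, Lemma~\ref{lemma:sols_with_fixed_elts} (applied to $\frakp_{s_2}\in\frakP_0(A)$) gives $O(n^{|\frakp_{s_2}|-\rk(A)-1})$ choices for $\bm{x}$, and for each such $\bm{x}$ the compatible $\bm{y}$'s are solutions of $A_{\frakq_{s_2}}^{\widebar{Q_{s_2}}}\bm{y}^{\widebar{Q_{s_2}}} = \bm{b}-A_{\frakq_{s_2}}^{Q_{s_2}}\bm{y}^{Q_{s_2}}$, of which Lemma~\ref{lemma:proper_sols_count} gives at most $O(n^{|\frakq_{s_2}|-|Q_{s_2}|-\rk(A)+r_{P_{s_2}}(A_{\frakp_{s_2}})})$ via the identity $r_{Q_{s_2}}(A_{\frakq_{s_2}})=r_{P_{s_2}}(A_{\frakp_{s_2}})$ from property~ii of leading triples. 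Multiplying, I obtain $O(n^{2|\frakp_{s_2}|-2\rk(A)-(|P_{s_2}|-r_{P_{s_2}})-1})$, which by property~iv is a factor of $O(1/n)$ smaller than $|S_0(A_{\frakp_{s_2}}\overset{M_{s_2}}{\times}A_{\frakq_{s_2}},\bm{b}^2)\cap[n]^{\ell_{s_2}}|$. The ``$\bm{y}$ hits $Z$'' case is entirely symmetric, yielding the required bound.

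The main obstacle is the delicate bookkeeping guaranteeing that each intersecting pair of blocks truly costs a full factor of $n$ relative to the total count. Naively, fixing one coordinate of $\bm{z}_{s_2}$ need not cost any factor of $n$; here it is property~ii of a leading triple that lets the $\bm{x}$- and $\bm{y}$-side accountings combine cleanly, and property~iv that ensures the per-block total achieves the matching $\Theta$-lower bound, without which the ratio of ``bad'' to ``total'' configurations could fail to vanish.
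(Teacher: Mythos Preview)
Your argument is correct and reaches the same conclusion as the paper, but the treatment of the intersecting-pair estimate is genuinely different. The paper bounds the number of bad pairs by directly lower-bounding the rank of the doubly-compounded matrix $(A_{\frakp_s}\overset{M_s}{\times}A_{\frakq_s})\overset{M}{\times}(A_{\frakp_t}\overset{M_t}{\times}A_{\frakq_t})$ via two applications of Lemma~\ref{lemma:compMatRankLB}; to make that work it first establishes the auxiliary fact that for a leading triple $A_{\frakp}^{\widebar{P}}$ has no column whose removal drops the rank, a consequence of the observation that no strict superset $P'\supsetneq P$ yields an equally large solution set. You instead split the second block into its two constituent proper solutions $(\bm{x},\bm{y})$, feed the component meeting $Z$ into Lemma~\ref{lemma:sols_with_fixed_elts} (using positivity of $A_{\frakp_{s_2}}$, respectively $A_{\frakq_{s_2}}$), and bound the remaining component by Lemma~\ref{lemma:proper_sols_count}; the pieces combine via properties~i, ii and~iv of leading triples. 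Your route is a bit more elementary in that it avoids the preliminary column-rank observation and the iterated rank computation, at the cost of invoking the algebraic identities i--iv more explicitly. The paper's route, by contrast, packages everything into a single rank inequality on the $4$-fold compounded matrix, which is cleaner once the preliminary fact is in hand. Both give exactly the same $O(n^{-1})$ saving per bad pair.
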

\begin{proof}[Proof of the claim:]
Let $(\frakp,\frakq,M)$ be a leading triple with $P:=\dom(M)$.
We will first show that $A_\frakp^{\widebar{P}}$ does not contain any column $c_i$ such that \[\rk(A_\frakp^{\widebar{P}\smallsetminus\{i\}}) = \rk(A_\frakp^{\widebar{P}})-1.\]
But this follows since for any such hypothetical column, every solution $\bm{x}\in S(A_\frakp^{\widebar{P}},\bm{0})$ would satisfy $x_i=0$, and we have seen before that this cannot happen for any leading triple.
We are now able to prove the claim.
Consider any $k/2$-tuple $\chi$ of solutions in \(\bigtimes_{j=1}^u \bigl(S_0(A_{\frakp_j}\overset{M_j}{\times}A_{\frakq_j}, \bm{b}^2)\cap[n]^{\ell_j}\bigr)^{i_j},\) of which there are $\prod_{j\in[u]} |S_0(A_{\frakp_j}\overset{M_j}{\times}A_{\frakq_j}, \bm{b}^2)\cap[n]^{\ell_j}|^{i_j}$ many.
When these solutions are all pairwise disjoint, they in fact define a solution in $S_0(A(i_1,\dots,i_u),\bm{b}^k)\cap[n]^{\ell_1 i_1+\dots+\ell_u i_u}$.
Otherwise, there exist indices $s,t\in[u]$ and a bijection $M\colon P\to Q$ with $\emptyset \neq P \subset [\ell_s]$ and $\emptyset \neq Q\subset [\ell_t]$ such that some pair $\bm{x},\bm{y}\in\chi$ defines a solution in \(S_0\bigl((A_{\frakp_s}\overset{M_s}{\times} A_{\frakq_s})\overset{M}{\times}(A_{\frakp_t}\overset{M_t}{\times} A_{\frakq_t}), \bm{b}^4\bigr)\cap[n]^{\ell_s+\ell_t-|P|}\).
But if we define $Q_1 = Q\cap [|\frakp_t|]$ and $Q_2 = Q\cap [\ell_t]\setminus [|\frakp_t|]$, we see that
\begin{align*}
\rk\bigl((A_{\frakp_s}\overset{M_s}{\times} A_{\frakq_s})\overset{M}{\times}(A_{\frakp_t}\overset{M_t}{\times} A_{\frakq_t})\bigr) &\geq \rk\bigl(A_{\frakp_s}\overset{M_s}{\times}A_{\frakq_s}\bigr) + \rk\bigl((A_{\frakp_t}\overset{M_t}{\times}A_{\frakq_t})^{\widebar{Q}}\bigr)\\
&\geq \rk\bigl(A_{\frakp_s}\overset{M_s}{\times}A_{\frakq_s}\bigr) + \rk(A_{\frakp_t}^{\widebar{Q}_1}) + \rk(A_{\frakq_t}^{\widebar{\im(M_t)}\setminus Q_2})\\
&\geq \rk\bigl(A_{\frakp_s}\overset{M_s}{\times}A_{\frakq_s}\bigr) + \rk(A_{\frakp_t})-|Q_1|+1+\rk{A_{\frakq_t}^{\widebar{\im(M_t)}}}-|Q_2|+1\\
&= \rk\bigl(A_{\frakp_s}\overset{M_s}{\times}A_{\frakq_s}\bigr) + 2\rk(A)-|Q|+2-r_{\im(M_t)}(A_{\frakq_t}).
\end{align*}
Here, for the third inequality we have used the previously established fact that removing any one column from $A_{\frakq_t}^{\widebar{\im(M_t)}}$ will not reduce the rank.
The same clearly holds for $A_{\frakp_t}$ since this matrix is positive.

We are almost done, since now we can use the upper bound from Lemma~\ref{lemma:proper_sols_count} and see that
\begin{align*}
&\bigl|S_0\bigl((A_{\frakp_s}\overset{M_s}{\times} A_{\frakq_s})\overset{M}{\times}(A_{\frakp_t}\overset{M_t}{\times} A_{\frakq_t}), \bm{b}^4\bigr)\cap[n]^{\ell_s+\ell_t-|P|}\bigr|\\
\leq{}& n^{\ell_s+\ell_t-|Q|-\rk\bigl((A_{\frakp_s}\overset{M_s}{\times} A_{\frakq_s})\overset{M}{\times}(A_{\frakp_t}\overset{M_t}{\times} A_{\frakq_t})\bigr)}\\
\leq{}& n^{\ell_s+\ell_t-\rk(A_{\frakp_s}\overset{M_s}{\times}A_{\frakq_s})-2\rk(A)+r_{\im(M_t)}(A_{\frakq_t})}\\
={}& o\bigl(|S_0(A_{\frakp_s}\overset{M_s}{\times}A_{\frakq_s},\bm{b}^2)\cap[n]^{\ell_s}|\cdot |S_0(A_{\frakp_t}\overset{M_t}{\times}A_{\frakq_t},\bm{b}^2)\cap[n]^{\ell_t}|\bigr),
\end{align*}
the last line following from the observations made from~\eqref{eq:if_case3_leadingnonmixed}.
Since there are only $O(1)$ many choices for $s$, $t$, and $M$, this proves the claim.
\end{proof}

Using the claim, we can now conclude
\begin{align*}
\mu_k &\sim \frac{k!}{2^{k/2}}\sum_{\substack{i_1,\dots,i_u\\ i_1+\dots+i_u = k/2}} \frac{1}{i_1!\cdots i_u!} p^{i_1\ell_1+\dots+i_u\ell_u} |S_0(A(i_1,\dots,i_u),\bm{b}^k)\cap [n]^{i_1\ell_1 + \dots + i_u\ell_u}|\\
&\sim \frac{k!}{(k/2)! 2^{k/2}} \sum_{\substack{i_1,\dots,i_u\\ i_1+\dots+i_u = k/2}} \binom{k/2}{i_1,\dots,i_u} \prod_{j\in[u]} \bigl(p^{\ell_j}|S_0(A_{\frakp_j}\overset{M_j}{\times}A_{\frakq_j}, \bm{b}^2)\cap[n]^{\ell_j}|\bigr)^{i_j}\\
&\sim \frac{k!}{(k/2)!\, 2^{k/2}}\mu_2^{k/2},
\end{align*}
which is what we wanted to prove.
\hfill\qedsymbol

\section{Concluding remarks}\label{sec:conclusion}

In this paper we have established sufficient conditions for the choice of $p$ in order to guarantee normal limiting distributions for the number of solutions to linear systems of equations of the form $A\bm{x} = \bm{b}$.
Specifically, we showed that $n(1-p)\rightarrow \infty$ and $np^{c(A_{\mathfrak{p}})}\rightarrow \infty$ (for all partitions under consideration) sufficed, and we used them in their full strength in Cases~2 and 3, respectively of the proof of Theorem~\ref{thm:distribution}. 
Both of these conditions are analogous to those that appear in Ruci\'nski's proof of normality for the number of copies of a given subgraph $H$ in the binomial random model $G(n,p)$.
Note that when comparing to the graph setting, the elements of $[n]$ in systems take on the function of both vertices and edges, and hence the analogue of our  $n(1-p)\rightarrow \infty$ requirement is to ask that $n^2(1-p)$, the expected number of edges, is unbounded.

In fact, in~\cite{Rucinski1988} Ruci\'nski showed that those conditions were necessary as well as sufficient. 
In our context we can say something similar regarding the condition that $n(1-p)$ is unbounded.
Observe that the expression $n(1-p)$ can be interpreted as the expected number of elements that are \emph{not} chosen in $[n]_p$, hence if $n(1-p)\not \rightarrow \infty$, then $[n]_p$ is typically all the interval $[n]$ with the exception of a bounded number of elements. 
Then it is easy to show that under this condition, $X_n$ is strongly concentrated around its mean value, concluding that $\tilde{X}_n \xrightarrow{d} 0$, and hence, we de not have a normal limiting distribution for the number of solutions. 

However, the argument used by Ruci\'nski in order to study the second condition requires a delicate study of moments of order $4$ and $6$, which we are unable to adapt. 
Roughly speaking, in our algebraic setting the structure of solutions is more complicated compared with the graph setting, as solutions with repeated components can be valid ones (the analogy would be to consider also subgraphs of the fixed graph $H$ that we want to count the corresponding number of subcopies). 
Hence, an open question arising from our work is to obtain an \emph{only if} statement of Theorem~\ref{thm:distribution}.
As an intermediate step, one could investigate the two main corollaries of our meta theorem, Theorems~\ref{thm:distProper} and~\ref{thm:distHomStrictBal}, and in fact, the following arguments suggest that the sufficient conditions on $p$ stated in these theorems are also necessary.

Let us start with the setting of Theorem~\ref{thm:distHomStrictBal}, that of non-trivial solutions in strictly balanced homogeneous systems of linear equations.
Here, as discussed before, Ru\'e, Spiegel and Zumalac\'{a}rregui in~\cite{RSZ2018} already established a threshold result showing that when $np^{c(A)}=o(1)$, asymptotically almost surely $S_1(A,\mathbf{0})\cap[n]^m_p$ is empty.
Using a concentration argument similar to the one above when discussing the necessity of $n(1-p)\to\infty$, we see that in this case $|S_1(A,\mathbf{0})\cap[n]^m_p|$ will converge in distribution to the constant $0$ distribution.
In addition to this threshold results, the authors also studied the distribution at the threshold, that is, the case $np^{c(A)}\to a>0$ where $a$ is a constant.
Their results show that here, the random variable $|S_1(A,\mathbf{0})\cap[n]^m_p|$ converges in distribution to a Poisson.
Putting all of this together, we see that this establishes the only if direction for Theorem~\ref{thm:distHomStrictBal}.

Let us now turn to the setting of Theorem~\ref{thm:distProper}, that of proper solutions, where there are no repeated variables by definition.
The argument that $np^{c(A)}=o(1)$ implies that $\tilde{X}_n \xrightarrow{d} 0$ is the same as before, so what remains is to understand the case when $np^{c(A)}$ tends to some positive constant.
When $A$ is strictly balanced, one can use the same arguments that were used by Ru\'e, Spiegel and Zumalac\'{a}rregui in the proof of the strictly balanced homogeneous case for the distribution of non-trivial solutions to see that one will also have a Poisson distribution in the setting of Theorem~\ref{thm:distProper}.
When $A$ is not strictly balanced, an analysis as was performed by Ruci\'nski in the subgraph setting is needed, but since we are now in the situation that all variables are pairwise distinct, the same result should follow, which would establish the necessity of $np^{c(A)}\to\infty$.

To conclude, let us mention that once one has proved a normal limiting distribution, a natural next question is the study local limit theorems as well as anti-concentration results and tail estimates in a general context.
This has been a very active trend of research in the last years, see for instance~\cite{Warnke2017,BeSaSa2021}.

\section*{Acknowledgments}
{\setlength\parindent{0pt}
We thank Oriol Serra for a preliminary reading of this manuscript, as well as suggestions and improvements in the presentation of the results. We also thanks the anonymous referees of the paper, whose suggestions had improved the final presentation of the paper.

This research was conducted while Maximilian W\"otzel was a member of the Barcelona Graduate School of Mathematics (BGSMath) as well as the Universitat Polit\`ecnica de Catalunya.
Maximilian W\"otzel acknowledges financial support from the Fondo Social Europeo and the Agencia Estatal de Investigaci\'on through the FPI grant number MDM-2014-0445-16-2 and the Spanish Ministry of Economy and Competitiveness, through the Mar\'ia de Maeztu Programme for Units of Excellence in R\&D (MDM-2014-0445), through the project MTM2017-82166-P, as well as from the Dutch Science Council (NWO) through the grant number OCENW.M20.009.}

Juanjo Ru\'e acknowledges financial support from Spanish State Research Agency through projects MTM2017-82166-P, PID2020-113082GB-I00,
the Severo Ochoa and María de Maeztu Program for Centers and Units of Excellence in R\&D (CEX2020-001084-M), and the Marie Curie RISE research network 'RandNet' MSCA-RISE-2020-101007705.

\bibliographystyle{amsplain}
\bibliography{refs}




\end{document}